\documentclass[14pt]{amsart}

\usepackage{amssymb}

\usepackage{amsmath,amssymb,amsfonts,enumerate,amsthm, amscd}
\usepackage[english]{babel}
\usepackage[colorlinks=true]{hyperref}
\usepackage{paralist}
\usepackage{verbatim}
\usepackage{mathrsfs}  
\usepackage{algorithm}
\usepackage{algorithmicx}
\usepackage{algpseudocode}
\usepackage{graphicx}

\usepackage{mathrsfs}
\usepackage{tikz}              
\usetikzlibrary{cd}  
\usepackage{tikz-cd}

\usepackage{color}
\usepackage{xcolor}

\usepackage{mathtools}

\usepackage{tikz}
\usetikzlibrary{arrows,decorations.pathmorphing,backgrounds,positioning,fit}
\RequirePackage{pgfrcs}

\theoremstyle{plain}
\newtheorem{thm}{\bf Theorem}[section]

\newtheorem{prop}[thm]{\bf Proposition}
\newtheorem{defn-prop}[thm]{\bf Definition and Proposition}
\newtheorem{lem}[thm]{\bf Lemma}
\newtheorem{cor}[thm]{\bf Corollary}

\theoremstyle{definition}
\newtheorem{defn}[thm]{\bf Definition}
\theoremstyle{remark}
\newtheorem{rem}[thm]{\bf Remark}

\newtheorem{exam}[thm]{\bf Example}

\theoremstyle{example}

\def \P{\mathbb{P}}

\def \F{\mathbb{F}}
\def \Z{\mathbb{Z}}

\def \QQ{\mathcal{Q}}

\def \G{\mathcal{G}}

\def \T{\mathcal{T}}

\def \O{\mathcal{O}}

\def \E{\mathcal{E}}

\def \I{\mathcal{I}}

\def \FF{\mathcal{F}}
\def \FFF{\mathfrak{F}}

\def \Pic{\mathrm{Pic}}

\def \NS{\mathrm{NS}}

\def \Div{\mathrm{Div}}
\def \sat{\mathrm{sat}}

\def \Supp{\mathrm{Supp}}

\def \rank{\mathrm{rank}}

\def \Gr{\mathrm{Gr}}

\def \NS{\mathrm{NS}}

\usetikzlibrary {matrix}

\makeatletter
\newcommand*{\dashdownarrow}{%
  \mathrel{%
    \mathpalette\dasharrow@vert{-90}%
  }%
}
\newcommand*{\dashuparrow}{%
  \mathrel{%
    \mathpalette\dasharrow@vert{90}%
  }%
}
\newcommand*{\dasharrow@vert}[2]{%
  \sbox0{$#1\vcenter{}$}%
  \sbox2{$#1\dashrightarrow\m@th$}%
  \dimen@=1.2\dimexpr\ht2-\ht0\relax
  \sbox2{\raisebox{-\ht0}{\unhcopy2}}%
  \ht2=\z@
  \dp2=\z@
  \vcenter{\hbox to 2\dimen@{\hfill\rotatebox{#2}{\box2}\hfill}}%
}
\makeatother

\begin{document}


\title[Hierarchical filtrations of vector bundles and birational geometry]{Hierarchical filtrations of vector bundles and birational geometry}
\author{Rahim Rahmati-asghar}

\keywords{Vector bundles, Hierarchical filtration, MMP, AG codes}

\makeatletter

\subjclass[2020]{primary; 14G50, Secondary; 14C20, 14H52}

\date{\today}

\begin{abstract}
We introduce and systematically study \emph{hierarchical filtrations} of vector bundles on smooth projective varieties. These are filtrations by saturated subsheaves of equal rank whose successive quotients are torsion sheaves supported in codimension one. The associated numerical invariant, called \emph{hierarchical depth}, measures the maximal length of such filtrations.

We establish general bounds for hierarchical depth in terms of the determinant class and provide exact formulas for smooth curves and varieties of Picard rank one. A key technical result concerns the commutativity of elementary transforms along disjoint divisors and their role in constructing filtrations.

For surfaces, we analyze the behavior of hierarchical depth under birational morphisms and prove that it transforms additively along the steps of the minimal model program. In particular, we obtain an explicit formula relating the depth on a surface to that on its minimal model via exceptional divisor contributions.

As an application, we connect hierarchical depth to degeneracies in algebraic--geometric codes and show that birational simplification via the MMP leads to effective improvements of code parameters. This establishes hierarchical depth as a new bridge between birational geometry, vector bundle theory, and coding theory.
\end{abstract}

\maketitle

\tableofcontents
\setcounter{tocdepth}{1}

\section*{Introduction}

Filtrations of vector bundles are fundamental tools in algebraic geometry, from Grothendieck's foundational work on coherent sheaves~\cite{Gr} to the Harder--Narasimhan filtration in stability theory~\cite{HaNa}. While stability filtrations depend on a choice of polarization, there remains a need for intrinsic, polarization--free filtrations that reflect divisorial and birational properties.

This paper introduces a new filtration--theoretic framework for torsion--free sheaves adapted precisely to birational geometry. We define \emph{hierarchical filtrations}---chains of saturated subsheaves of equal rank with torsion quotients supported in codimension one---and their associated numerical invariant, the \emph{hierarchical depth}. Unlike stability--theoretic invariants, hierarchical depth is independent of any polarization and governed instead by the determinant and the divisor theory of the underlying variety.

Our work builds on and substantially extends the hierarchical filtration theory for line bundles developed in~\cite{Ra}, where applications to algebraic--geometric codes were first explored. The present paper develops a comprehensive theory for arbitrary rank, investigates its interaction with birational geometry and the minimal model program, and establishes new connections to coding theory.

\medskip
\noindent\textbf{Main results}

\begin{enumerate}[1.]
\item We introduce hierarchical filtrations and hierarchical depth (Definition~\ref{def:hier-vb-general}), prove their finiteness (Proposition~\ref{prop:hierar_vector_bundle-corrected}), and give explicit formulas in basic geometric situations (Propositions~\ref{prop:curve-split-max-depth-corrected}, \ref{prop:hierar_vector_bundle-corrected}).

\item We establish commutativity properties for elementary transforms along disjoint divisors (Lemma~\ref{lem:commute-elem-transforms-refined}, Corollary~\ref{cor:reorder-disjoint}), providing a concrete mechanism for constructing filtrations.

\item For surfaces, we prove that hierarchical depth transforms additively under birational morphisms arising in the MMP (Theorem~\ref{thm:exact-depth}). In particular, it admits an explicit description on minimal models with variation controlled by exceptional divisors.

\item We relate hierarchical depth to slope stability (Theorem~\ref{thm:divisorial-slope-behavior}) and show it is preserved under birational morphisms (Corollary~\ref{cor:bir-slope-stability-refined}).

\item As applications to coding theory, we demonstrate that hierarchical depth detects divisorial degeneracies affecting algebraic--geometric code parameters (Proposition \ref{prop:codes-functorial}). Birational simplification via the MMP leads to effective improvements in both absolute and normalized minimum distances (Corollary \ref{cor:MMP-improves})
\end{enumerate}

These results situate hierarchical depth as a genuinely birational invariant in the surface case and provide a new computational tool linking the geometry of vector bundles to arithmetic applications.

\section*{Acknowledgments}

The author is grateful to Professor Pierre Deligne for valuable comments that led to a clarification and correction of the definition of hierarchical filtration.




\section{Preliminaries}
\label{sec:prelim}

\noindent\textbf{Conventions.} Throughout, $\F$ denotes an algebraically closed field of arbitrary characteristic. All schemes are separated and of finite type over $\F$. A \emph{variety} is an integral normal projective $\F$-scheme. Unless otherwise stated, varieties are assumed smooth; the restriction to dimension at most $2$ applies only when invoking the minimal model program in Sections~\ref{sec:MMP-filtrations} and onward.

\medskip
\noindent\textbf{Divisors and linear equivalence.} For a variety $X$, denote by $\Div(X)$ the group of Weil divisors and by $\Pic(X)$ the Picard group. When $X$ is smooth, Cartier and Weil divisors coincide. An effective divisor is written $D\ge0$. Linear equivalence is denoted by $\sim$.

\medskip
\noindent\textbf{Sheaves and determinants.} All sheaves are coherent $\O_X$-modules. A coherent sheaf $\E$ is \emph{torsion-free} if it has no nonzero subsheaves supported in codimension $\ge1$. A subsheaf $\FF\subseteq\E$ is \emph{saturated} if $\E/\FF$ is torsion-free.

For a torsion-free sheaf $\E$ of rank $r$, its \emph{determinant} is
\[
\det(\E) := \bigl(\wedge^r \E\bigr)^{\vee\vee},
\]
identified with the corresponding divisor class. If $\FF\hookrightarrow\E$ is an inclusion of torsion-free sheaves of equal rank with torsion quotient, there exists an effective divisor $D$ (the \emph{determinantal divisor}) such that
\[
\det(\FF)\simeq\det(\E)(-D).
\]

\medskip
\noindent\textbf{Elementary transforms.} Let $\E$ be a torsion-free sheaf on $X$ and $D\subset X$ an effective divisor. An \emph{elementary transform} of $\E$ along $D$ is a sheaf $\E'$ fitting into
\[
0\longrightarrow\E'\longrightarrow\E\longrightarrow\T\longrightarrow0,
\]
where $\T$ is a torsion sheaf supported on $D$. Such transforms preserve rank and satisfy $\det(\E')\simeq\det(\E)(-D)$.

\medskip
\noindent\textbf{Birational morphisms.} For a proper birational morphism $f:Y\to X$ between normal varieties:
\begin{itemize}
\item The pullback $f^*\E$ of a torsion-free sheaf $\E$ on $X$ may contain torsion; in practice we work with its torsion-free quotient.
\item The pushforward $f_*\FF$ of a torsion-free sheaf $\FF$ on $Y$ is understood to be replaced by its saturation when viewed as a subsheaf on $X$.
\end{itemize}
These conventions ensure that all sheaves considered remain torsion-free and all inclusions saturated.

\medskip
\noindent\textbf{Cohomology and slopes.} We write $h^i(X,\FF)=\dim_\F H^i(X,\FF)$. For a smooth projective variety $X$ with ample divisor $H$, the \emph{slope} of a torsion-free sheaf $\E$ is
\[
\mu_H(\E)=\frac{c_1(\det\E)\cdot H^{\dim X-1}}{\rank\E}.
\]

\section{Hierarchical filtration of vector bundles}





\begin{defn}\label{def:hier-vb-general}
Let $X$ be a smooth projective variety over a field, $\Lambda_0\in\Pic(X)$ a reference line bundle (typically $\Lambda_0=\O_X$), and $\E$ a vector bundle of rank $r$ on $X$. A \emph{hierarchical filtration of $\E$ normalized by $\Lambda_0$} is a finite chain
\[
\FFF:\E_0\subset\E_1\subset\cdots\subset\E_h=\E,
\]
where for each $i=1,\dots,h$:
\begin{enumerate}[(i)]
\item $\E_i$ is a vector bundle of rank $r$, and $\E_{i-1}\hookrightarrow\E_i$ is saturated;
\item There exists a nonzero effective Cartier divisor $D_i$ with $\det(\E_i)\simeq\det(\E_{i-1})\otimes\O_X(D_i)$;
\item $\det(\E_0)\simeq\Lambda_0$.
\end{enumerate}
\end{defn}

Note that each one of two following statements are equivalent to condition (ii) of above definition:

(ii-1) the torsion sheaf $\QQ_i$ is supported (set-theoretically) on $D_i$, and
$$
c_1(\E_i)=c_1(\E_{i-1})+[D_i]\ \in\mathrm{NS}(X).
$$

(ii-2) locally on $X$ the inclusion $\E_{i-1}\hookrightarrow\E_i$ is obtained from $\E_{i-1}$ by an elementary transform (Hecke modification) along the Cartier divisor $D_i$; equivalently there is an exact sequence
$$
0\longrightarrow \E_{i-1}\longrightarrow \E_i \longrightarrow \T_i \longrightarrow 0
$$
with $\T_i$ a torsion sheaf supported on $D_i$ whose determinant (in the Knudsen-Mumford sense, or via the Fitting ideal) is $\O_X(D_i)$.

\begin{defn}\label{def:hier-depth}
Let $\E$ and $\Lambda_0$ be as above. The \emph{hierarchical depth of $\E$ with respect to $\Lambda_0$}, denoted $h_{\Lambda_0}(\E)$, is the maximal integer $h\ge 0$ for which there exists a hierarchical filtration of length $h$ normalized by $\Lambda_0$. If no such finite filtration exists we set $h_{\Lambda_0}(\E)=-\infty$. We adopt the convention $h_{\Lambda_0}(\E_0)=0$ when $\E_0$ already equals the chosen starting bundle (equivalently when the chain is trivial).
\end{defn}

\begin{rem}\label{rmk:remarks-hier}
\begin{enumerate}[(i)]


\item \textbf{Normalization.} If one does \emph{not} normalize the filtration (for instance by fixing $\det(\E_0)$), the notion of hierarchical filtration (and the numerical data $(D_i)$) is only defined up to simultaneous twist of every term by a line bundle. In particular one can ``renormalize'' a filtration by tensoring the entire chain with $\O_X(-D)$ and thereby create non-canonical shifts. Requiring $\det(\E_0)\simeq\Lambda_0$ (for example $\Lambda_0=\O_X$) pins down this ambiguity and prevents the filtration from sliding to $-\infty$.

\item
\textbf{Twist–invariance property.}
The hierarchical depth is stable under tensor twists, provided the
normalization is adjusted accordingly:
for any $M\in\Pic(X)$ one has
\begin{equation}\label{eq:twist-invariance}
  h_{\Lambda_0\otimes M^{\otimes r}}(\E\otimes M)
  = h_{\Lambda_0}(\E),
\end{equation}
where $r=\rank(\E)$.
Indeed, tensoring a normalized filtration of $\E$ by $M$ yields a
normalized filtration of $\E\otimes M$ with respect to
$\Lambda_0\otimes M^{\otimes r}$, since
$\det(\E_i\otimes M)\simeq\det(\E_i)\otimes M^{\otimes r}$ at every step.
Hence the numerical invariant $h_{\Lambda_0}(\E)$ depends only on the
pair $(\det(\E),\Lambda_0)$ modulo this canonical renormalization.

\item
\textbf{Intrinsic interpretation.}
Equation~\eqref{eq:twist-invariance} shows that the hierarchical depth
is an intrinsic invariant of the bundle $\E$ up to tensor equivalence,
analogous to how the slope $\mu(\E)=\deg(\E)/\rank(\E)$ is invariant under
twists by degree-zero line bundles on curves.  The normalization by
$\Lambda_0$ thus provides a canonical reference frame for measuring
divisorial increments in $\Pic(X)$.

\item \textbf{Rank one case.} When $r=1$ this definition recovers Definition 1.1 of \cite{Ra} (with the same normalization choice $\Lambda_0=\O_X$ if one so desires): then $\E_i$ are line bundles and the determinant condition is the same as $\E_i\cong\E_{i-1}\otimes\O_X(D_i)$.
\end{enumerate}
\end{rem}

\begin{prop}\label{prop:hierar_vector_bundle-corrected}
Let $X$ be a smooth projective variety of dimension $n$ such that 
$\Pic(X)\cong\mathbb Z\cdot A$ for some ample effective divisor $A$. 
Fix a reference line bundle $\Lambda_0\in\Pic(X)$ and an ample polarization 
$\O_X(1)$. Let $\E$ be a vector bundle on $X$, 
and write
\[
\det(\E)\simeq \O_X(dA)
\]
for a unique integer $d\in\mathbb Z$.  Write $\Lambda_0\simeq\O_X(d_0A)$.

Then any hierarchical filtration of $\E$ normalized by $\Lambda_0$ has length bounded by
\[
h \le d-d_0.
\]
In particular, when $\Lambda_0\cong\O_X$ (so $d_0=0$) one has the uniform upper bound
\[
h_{\Lambda_0}(\E)\le d.
\]

Moreover, equality $h_{\Lambda_0}(\E)=d-d_0$ occurs whenever there exists a hierarchical
filtration of $\E$ whose successive determinant increments are all linearly equivalent to $A$,
i.e. there exist effective divisors $D_1,\dots,D_{d-d_0}\sim A$ and a chain
\[
\E_0\subset\E_1\subset\cdots\subset\E_{d-d_0}=\E
\]
(normalized by $\det(\E_0)\simeq\Lambda_0$) with 
$\det(\E_i)\simeq\det(\E_{i-1})\otimes\O_X(D_i)$ for every $i$. 
\end{prop}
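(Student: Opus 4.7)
The plan is to derive the upper bound as a direct consequence of the cyclic ample hypothesis on $\Pic(X)$, and then to observe that the equality assertion follows immediately by comparing the length of the hypothesized extremal filtration with this upper bound.

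For the upper bound, consider an arbitrary hierarchical filtration $\FFF:\E_0\subset\E_1\subset\cdots\subset\E_h=\E$ normalized by $\Lambda_0$. Since $\Pic(X)=\mathbb{Z}\cdot A$, each effective Cartier divisor $D_i$ appearing in condition (ii) of Definition~\ref{def:hier-vb-general} can be written uniquely as $D_i\sim a_iA$ for some $a_i\in\mathbb{Z}$. The main technical point is the claim that $a_i\ge 1$ for every $i$: since $D_i$ is nonzero and effective and $A$ is ample, the intersection number $D_i\cdot A^{n-1}$ is strictly positive; but this equals $a_i(A^n)$, and $A^n>0$ by ampleness, which forces $a_i>0$, hence $a_i\ge 1$. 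Telescoping the determinant relation along the filtration gives
\[
\det(\E)\simeq\det(\E_0)\otimes\O_X\Bigl(\sum_{i=1}^{h}D_i\Bigr),
\]
which in terms of multiples of $A$ reads $d=d_0+\sum_{i=1}^{h}a_i$. Since each $a_i\ge 1$, I conclude $h\le\sum_{i=1}^{h}a_i=d-d_0$, and maximizing over all such filtrations yields $h_{\Lambda_0}(\E)\le d-d_0$.

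For the equality clause, the hypothesis supplies a normalized hierarchical filtration of $\E$ of length exactly $d-d_0$ in which every increment satisfies $D_i\sim A$ (so $a_i=1$ for all $i$, consistent with the telescoping identity above). By Definition~\ref{def:hier-depth} this forces $h_{\Lambda_0}(\E)\ge d-d_0$; combined with the upper bound just established, equality holds.

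Since the argument reduces to a numerical telescoping plus the positivity of effective divisors against an ample class, there is no substantive obstacle to overcome. The only subtlety worth flagging is the sign of $a_i$, which relies essentially on both the ampleness of $A$ and the effectivity requirement in Definition~\ref{def:hier-vb-general}(ii); dropping either would admit $a_i\le 0$ and destroy the bound. This is also why the hypothesis of Picard rank one is used only qualitatively (to index $\Pic(X)$ by $\mathbb{Z}$), the real work being done by the intersection-theoretic positivity of $A$.
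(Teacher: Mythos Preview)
Your proof is correct and follows essentially the same approach as the paper's: both telescope the determinant relation, write each increment as $D_i\sim a_iA$ with $a_i\ge 1$, and deduce $h\le\sum a_i=d-d_0$. The only cosmetic difference is that you justify $a_i\ge 1$ via the intersection number $D_i\cdot A^{n-1}=a_i(A^n)>0$, whereas the paper simply asserts that effective divisors on a Picard-rank-one variety are positive multiples of the ample generator; both are valid and the overall structure is identical.
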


\begin{proof}
Let
\[
\FFF:\E_0\subset\E_1\subset\cdots\subset\E_h=\E
\]
be a hierarchical filtration normalized by $\Lambda_0$, so $\det(\E_0)\simeq\Lambda_0$.
For each $i$ there is a nonzero effective Cartier divisor $D_i$ with
\[
\det(\E_i)\simeq\det(\E_{i-1})\otimes\O_X(D_i).
\]
Thus
\[
\det(\E)\simeq\det(\E_0)\otimes\O_X \Big(\sum_{i=1}^h D_i\Big),
\]
and in $\NS(X)$
\[
c_1(\det\E)-c_1(\Lambda_0)=\big[D_1+\cdots+D_h\big].
\]

Under the hypothesis \(\Pic(X)\cong\Z\cdot A\), each effective Cartier divisor
$D_i$ is linearly equivalent to a positive integer multiple of \(A\). Hence
there exist integers \(d_i\ge1\) such that
\[
D_i \sim d_i A \qquad (i=1,\dots,h).
\]
Therefore in $\Pic(X)$ we have
\[
\O_X(dA)  \simeq  \det(\E)\simeq \Lambda_0\otimes\O_X\Big(\sum_{i=1}^h D_i\Big)
       \simeq  \O_X \Big(d_0A + \sum_{i=1}^h d_i A\Big),
\]
so equality of divisor classes yields the integer identity
\[
d  =  d_0 + \sum_{i=1}^h d_i.
\]
Since each \(d_i\ge1\) we obtain
\[
d-d_0  =  \sum_{i=1}^h d_i  \ge  h,
\]
which is exactly the desired inequality \(h\le d-d_0\).

The claim about equality is immediate: equality \(h=d-d_0\) holds precisely
when every \(d_i=1\), i.e. when each determinant increment \(D_i\) is linearly
equivalent to \(A\). This completes the proof.
\end{proof}

\begin{rem}\label{rem:hierar_depth_vector}
\textbf{(1)} 
The upper bound $h_{\Lambda_0}(\E)\le d-d_0$ follows directly from the fact
that, under the hypothesis $\Pic(X)\cong\Z\cdot A$, every effective increment
$D_i$ satisfies $D_i\sim d_iA$ with $d_i\ge1$.  
Thus
\[
d-d_0=\sum_{i=1}^h d_i \ge h,
\]
showing that the maximal possible hierarchical depth depends only on the
difference of determinant classes $\det(\E)$ and $\Lambda_0$, but \emph{not} on
the internal geometry of~$\E$.  
This is the exact analogue of the line--bundle case.

\noindent\textbf{(2)} 
Unlike the line--bundle case, equality $h_{\Lambda_0}(\E)=d-d_0$ is generally
\emph{not} automatic for vector bundles.  
Equality would require that each increment divisor satisfy $D_i\sim A$, i.e.
that $d_i=1$ for all $i$ in the decomposition
\[
D_i \sim d_i A,\qquad d_i\ge1.
\]
However, for a given bundle $\E$ there may be no hierarchical filtration whose
determinant increments are all linearly equivalent to $A$.  
Geometrically, this means that although $\Pic(X)$ has rank one, the structure
of $\E$ (e.g.\ its possible elementary transforms, base--locus behavior, or
torsion constraints) may force certain increments to have degree $d_i>1$,
reducing the achievable depth below the theoretical maximum $d-d_0$.
\end{rem}



\section{Elementary transforms and commutativity}

\begin{defn}[Elementary transformation]\label{def:elem-transform}
\cite[Definition 5.2.1]{HuLe} Let $X$ be a smooth projective surface and $C$ be an effective divisor on $X$. Let 
$i\colon C\hookrightarrow X$ denote the embedding $C\subset X$.
Let $\FF$ be a vector bundle on $X$ and let $\mathcal{G}$ be a vector bundle on $C$.  
A coherent sheaf $\E$ on $X$ is called an 
\emph{elementary transform of $\FF$ along $\mathcal{G}$} 
if there exists a short exact sequence
\begin{equation}\label{eq:HL-elementary}
0\longrightarrow\E\longrightarrow\FF\longrightarrow i_{*}\mathcal{G}\longrightarrow 0.
\end{equation}
\end{defn}

It follows from Proposition 5.2.2 from \cite{HuLe} that $\E$ is locally free and, moreover, it is a vector bundle since $X$ is smooth.

\begin{lem} 
\label{lem:commute-elem-transforms-refined}
Let $X$ be a smooth projective variety and let $\E$ be a vector bundle on $X$.
Let $D_1,D_2\subset X$ be effective Cartier divisors with
\[
\Supp(D_1)\cap\Supp(D_2)=\varnothing.
\]
For $i=1,2$ let $\QQ_i$ be a nonzero coherent torsion sheaf supported on $D_i$
and let $\varphi_i:\E\to\QQ_i$ be surjective $\O_X$--linear maps.  Define
\[
V_1 := \ker(\varphi_1),\qquad V_2 := \ker(\varphi_2),
\]
and the successive kernels
\[
V_{12} := \ker\big(V_1 \xrightarrow{\ \overline\varphi_2\ } \QQ_2\big),
\qquad
V_{21} := \ker\big(V_2 \xrightarrow{\ \overline\varphi_1\ } \QQ_1\big),
\]
where $\overline\varphi_j$ denotes the map induced by $\varphi_j$ on the
relevant kernel.  Finally set
\[
V := \ker\big(\E\xrightarrow{\ (\varphi_1,\varphi_2)\ } \QQ_1\oplus\QQ_2\big).
\]

Then the following hold.
\begin{enumerate}[(1)]
\item As subsheaves of $\E$ we have
\[
V_{12}=V_{21}=V.
\]
In particular, the order of taking kernels (elementary transforms) with
respect to $\varphi_1$ and $\varphi_2$ does not affect the resulting subsheaf of $\E$.

\item If, in addition, the three kernels
$V_1,V_2,V$ are locally free 
, then the equalities in (1) are equalities of subbundles of $\E$. In this case we say that the corresponding elementary transforms commute and the order of the transforms is immaterial in the category of vector bundles.
\end{enumerate}
\end{lem}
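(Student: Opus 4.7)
The plan is to identify all three sheaves $V_{12}$, $V_{21}$, and $V$ with the single intersection $V_1 \cap V_2$ inside $\E$, thereby reducing part (1) to unwinding definitions, and then to use the disjoint-supports hypothesis in part (2) both to upgrade this to equalities of genuine sub-vector-bundles and to verify that the successive constructions are authentic elementary transforms.

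First I would read off part (1) directly from the definitions. By construction the map $\overline{\varphi_2}\colon V_1 \to \QQ_2$ is the restriction of $\varphi_2$ to the subsheaf $V_1\subset\E$, so
\[
V_{12}=\ker(\overline{\varphi_2})=V_1\cap\ker(\varphi_2)=V_1\cap V_2,
\]
and symmetrically $V_{21}=V_2\cap V_1$. Likewise $V=\ker\bigl((\varphi_1,\varphi_2)\bigr)$ consists of local sections annihilated by both $\varphi_1$ and $\varphi_2$, which is exactly $V_1\cap V_2$. This triple identification proves (1) without invoking the disjointness of supports.

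The hypothesis $\Supp(D_1)\cap\Supp(D_2)=\varnothing$ enters to ensure that the intermediate short exact sequences really are elementary transforms, not merely kernels of arbitrary morphisms. A stalk-level argument suffices: at points outside $D_1\cup D_2$ all torsion sheaves vanish; at a point of $D_1$ (disjoint from $D_2$) the stalk $\QQ_{2}$ is zero, so $V_1\hookrightarrow\E$ is a local isomorphism there, making the restriction $\varphi_2|_{V_1}\colon V_1\to\QQ_2$ surjective with the same image as $\varphi_2$, and symmetrically for $\varphi_1|_{V_2}$. Hence $(\varphi_1,\varphi_2)\colon\E\to\QQ_1\oplus\QQ_2$ is surjective and the successive kernels genuinely compute elementary transforms along $D_1$ then $D_2$, and in the reverse order.

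For part (2), once $V_1$, $V_2$, $V$ are assumed locally free, the equalities of subsheaves from (1) automatically become equalities of locally free subsheaves, hence of sub-vector-bundles of $\E$, and the two iterated elementary-transform procedures realize the same subbundle, which is the desired commutativity. The main (mild) obstacle is the verification that the induced maps on intermediate kernels remain surjective, so that each step is a bona fide elementary transform rather than merely a kernel of a possibly non-surjective map; the disjointness of $D_1$ and $D_2$ resolves this cleanly because each divisor interacts with only one of the two transforms.
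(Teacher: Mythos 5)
Your proof is correct and follows essentially the same route as the paper's: both arguments identify $V_{12}$, $V_{21}$, and $V$ with the common subsheaf of local sections annihilated by both $\varphi_1$ and $\varphi_2$, i.e.\ with $V_1\cap V_2$, and then observe that local freeness upgrades the equality of subsheaves to an equality of subbundles. Your proposal adds one genuinely useful refinement that the paper's proof omits: you note that the set-theoretic identification in part (1) does not actually use $\Supp(D_1)\cap\Supp(D_2)=\varnothing$, and that the real role of disjointness is to guarantee that the induced maps $\overline\varphi_2\colon V_1\to\QQ_2$ and $\overline\varphi_1\colon V_2\to\QQ_1$ remain surjective (via the stalk argument that $\QQ_1$ vanishes near $\Supp(D_2)$ and vice versa), so that each successive kernel is a bona fide elementary transform rather than the kernel of a possibly non-surjective map. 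The paper's proof appeals to the maps imposing ``independent conditions'' but never verifies this surjectivity, so your version is the more complete of the two.
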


\begin{proof}
(1) Since $\Supp(\QQ_1)$ and $\Supp(\QQ_2)$ are disjoint, the two surjective
maps $\varphi_1,\varphi_2$ impose independent conditions on local sections
of $\E$.  Concretely, for any open set $U\subset X$ we have
\[
\begin{aligned}
V(U)
&= \{s\in\E(U) : \varphi_1(s)=0 \text{ in }\QQ_1(U),\ \varphi_2(s)=0 \text{ in }\QQ_2(U)\},\\
V_{12}(U)
&= \{s\in\E(U) : \varphi_1(s)=0 \text{ and } \varphi_2(s)=0\},\\
V_{21}(U)
&= \{s\in\E(U) : \varphi_2(s)=0 \text{ and } \varphi_1(s)=0\}.
\end{aligned}
\]
The equalities of the three sets above are immediate; hence the three
kernels coincide as subsheaves of \(\E\).

(2) If the kernels $V_1,V_2,V$ are locally free, then the equalities of
subsheaves are equalities of locally free subsheaves (subbundles).  In
particular all constructions may be carried out inside the category of
vector bundles and the elementary transforms commute as vector bundles.
\end{proof}

\begin{rem}\label{rem:commute-necessity}
\textbf{(1)} The disjointness hypothesis in Lemma \ref{lem:commute-elem-transforms-refined} is essential. If $\Supp(D_1)\cap\Supp(D_2)\neq\varnothing$
then the two quotient maps may interact on the overlap and the iterated
kernels need not coincide (indeed saturation may collapse intermediate steps).
\noindent\textbf{(2)} The lemma is stated for arbitrary torsion quotients $\QQ_i$ supported on
the Cartier divisors.  The additional local-freeness hypothesis in (2) is
exactly the bridge to the Huybrechts–Lehn notion of elementary transform
(\cite{HuLe}) where one typically takes $\QQ_i=i_*(\mathcal G_i)$ for a
vector bundle $\mathcal G_i$ on $D_i$; in that case the kernel is usually a
vector bundle. 
\end{rem}

\begin{cor}\label{cor:reorder-disjoint}
Let $X$ be a smooth projective variety and let $\E$ be a vector bundle on $X$.
Suppose a hierarchical filtration of $\E$ is given by a finite sequence of
elementary transforms
\[
  \E = V_0 \supset V_1 \supset \cdots \supset V_h,
  \qquad
  V_{k+1}=\ker \bigl(V_k \xrightarrow{\ \varphi_k\ } \QQ_k\bigr),
\]
where for each $k$ the sheaf $\QQ_k$ is a nonzero torsion sheaf supported on an
effective Cartier divisor $D_k\subset X$.

Assume that the set of divisors $\{D_k\}$ is partitioned into two
disjoint collections
\[
\mathcal A=\{D_{a_1},\dots,D_{a_r}\},\qquad
\mathcal B=\{D_{b_1},\dots,D_{b_s}\},
\]
such that 
\[
  \Supp(D_a)\cap\Supp(D_b)=\varnothing
  \qquad\text{for all } D_a\in\mathcal A,\ D_b\in\mathcal B.
\]

Then:

\begin{enumerate}[(1)]
\item\label{cor:reorder-disjoint:commute}
For every pair $D_a\in\mathcal A$ and $D_b\in\mathcal B$, the elementary
transforms along $D_a$ and $D_b$ commute.  More precisely, for any surjective
morphisms $\varphi_a:\E\to\QQ_a$ and $\varphi_b:\E\to\QQ_b$ with
$\Supp(\QQ_a)\subset D_a$ and $\Supp(\QQ_b)\subset D_b$, one has isomorphisms
of subsheaves of $\E$:
\[
\ker\Bigl( \ker(\E\xrightarrow{\varphi_a}\QQ_a)
      \xrightarrow{\overline\varphi_b} \QQ_b \Bigr)
=
\ker\Bigl( \ker(\E\xrightarrow{\varphi_b}\QQ_b)
      \xrightarrow{\overline\varphi_a} \QQ_a \Bigr)
=
\ker \Bigl( \E\xrightarrow{\ (\varphi_a,\varphi_b)\ } \QQ_a\oplus\QQ_b \Bigr),
\]
where $\overline\varphi_j$ denotes the map induced on the kernel of
$\varphi_i$.

\item\label{cor:reorder-disjoint:reorder}
By repeatedly interchanging adjacent transforms involving one divisor from
$\mathcal A$ and one from $\mathcal B$, the filtration can be reordered into
another hierarchical filtration of the same length $h$, in which all
transforms supported on divisors in $\mathcal A$ occur before all transforms
supported on divisors in $\mathcal B$.

\item\label{cor:reorder-disjoint:depth}
In particular, increments supported on disjoint divisors impose no obstruction
to achieving the maximal possible hierarchical depth: their order can be
rearranged arbitrarily without changing the total number of steps.
\end{enumerate}
\end{cor}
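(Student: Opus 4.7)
The plan is to derive part (1) as an immediate application of Lemma \ref{lem:commute-elem-transforms-refined}, then use that commutativity as a local swap operation to establish (2) by a bubble-sort style argument, and finally read off (3) from the observation that the reordering preserves the length of the filtration.

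For (1), the disjointness hypothesis $\Supp(D_a)\cap\Supp(D_b)=\varnothing$ is precisely what Lemma \ref{lem:commute-elem-transforms-refined} requires, so specializing its conclusion to the pair $(\varphi_a,\varphi_b)$ directly gives the asserted identity of three kernels as subsheaves of $\E$. For (2), I would argue by a finite sequence of adjacent transpositions. Suppose the filtration contains a two-step segment $V_{k-1}\supset V_k\supset V_{k+1}$ in which $V_k$ is obtained by transforming along some $D_b\in\mathcal B$ and $V_{k+1}$ by transforming along some $D_a\in\mathcal A$. The plan is to replace this segment by $V_{k-1}\supset V_k'\supset V_{k+1}$, where $V_k'$ is the kernel of the opposite composition. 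Applying part (1) to the bundle $V_{k-1}$ with its two induced quotient maps guarantees that the double kernel $V_{k+1}$ is unchanged, so the tail $V_{k+1}\supset\cdots\supset V_h$ carries over verbatim and the modified chain is again a hierarchical filtration of length $h$. Any permutation bringing the $\mathcal A$-labels to the front is a product of such adjacent swaps, so iteration yields the desired ordered filtration.

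Part (3) is then immediate: each swap keeps the total number of steps fixed, and the disjointness hypothesis ensures that every quotient in the swapped segment remains a nonzero torsion sheaf on its intended divisor, so no step is collapsed or merged. The main obstacle will be verifying that after a swap the intermediate sheaf $V_k'$ is itself a vector bundle, so that the new chain satisfies condition~(i) of Definition~\ref{def:hier-vb-general}, and that the two replacement quotient maps are still the appropriate Hecke modifications. This follows from part (2) of Lemma \ref{lem:commute-elem-transforms-refined} together with disjointness: on a neighbourhood of $D_b$ meeting neither $D_a$ nor the rest of the filtration data, the quotient map into $\QQ_a$ vanishes, so the transform along $D_b$ proceeds identically on $\E$ and on $\ker(\varphi_a)$, and symmetrically near $D_a$. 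Hence $V_k'$ is locally free on an open cover and the entire argument reduces to the commutativity lemma plus routine combinatorial bookkeeping, with no further geometric input required.
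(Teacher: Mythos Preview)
Your proposal is correct and follows essentially the same strategy as the paper: invoke Lemma~\ref{lem:commute-elem-transforms-refined} for part~(1), then run a bubble-sort of adjacent $\mathcal A$/$\mathcal B$ swaps for part~(2), with part~(3) immediate. If anything, you are slightly more careful than the paper in flagging the need to check that the new intermediate term $V_k'$ is locally free so that the swapped chain still satisfies Definition~\ref{def:hier-vb-general}(i); the paper's proof asserts the result is ``another hierarchical filtration'' without pausing on this point.
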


\begin{proof}
We first prove \eqref{cor:reorder-disjoint:commute}.  
Let $D_a\in\mathcal A$, $D_b\in\mathcal B$ and suppose
$\varphi_a:\E\to\QQ_a$, $\varphi_b:\E\to\QQ_b$ are surjective with
$\Supp(\QQ_a)\subset D_a$, $\Supp(\QQ_b)\subset D_b$.  Set
\[
V_a:=\ker(\varphi_a),\qquad V_b:=\ker(\varphi_b).
\]
Define the successive transforms
\[
V_{ab}
:= \ker\bigl(V_a \xrightarrow{\overline\varphi_b} \QQ_b\bigr), 
\qquad
V_{ba}
:= \ker\bigl(V_b \xrightarrow{\overline\varphi_a} \QQ_a\bigr),
\]
and the simultaneous kernel
\[
V := \ker \bigl(\E\xrightarrow{(\varphi_a,\varphi_b)}\QQ_a\oplus\QQ_b\bigr).
\]

Since $\Supp(D_a)\cap\Supp(D_b)=\varnothing$, the torsion sheaves $\QQ_a$ and
$\QQ_b$ are supported on disjoint closed subsets.  Therefore, for any section
$s\in\E(U)$ on an open set $U\subset X$,
\[
\varphi_a(s)=0 \quad\text{and}\quad \varphi_b(s)=0
\]
are independent conditions.  Consequently,
\[
\begin{aligned}
V(U)
&= \{s\in\E(U):\varphi_a(s)=0,\ \varphi_b(s)=0\},\\
V_{ab}(U)
&= \{s\in\E(U):\varphi_a(s)=0,\ \varphi_b(s)=0\},\\
V_{ba}(U)
&= \{s\in\E(U):\varphi_b(s)=0,\ \varphi_a(s)=0\}.
\end{aligned}
\]
Thus $V=V_{ab}=V_{ba}$ as subsheaves of $\E$, proving commutativity.

\medskip
We now prove~\eqref{cor:reorder-disjoint:reorder}.  
Consider the given filtration
\[
\E = V_0 \supset V_1 \supset \cdots \supset V_h,
\qquad
V_{k+1}=\ker \bigl(V_k\xrightarrow{\varphi_k}\QQ_k\bigr).
\]
Label each index $k$ as type $A$ or type $B$ according to whether
$\Supp(\QQ_k)\subset D_a\in\mathcal A$ or $\Supp(\QQ_k)\subset D_b\in\mathcal
B$.  If a step of type $A$ is immediately followed by a step of type $B$, say
\[
V_k \supset V_{k+1} \supset V_{k+2},
\]
with $V_{k+1}=\ker(V_k\to\QQ_a)$ and $V_{k+2}=\ker(V_{k+1}\to\QQ_b)$, then by
the commutativity just proved, the composite kernel
\[
\ker \bigl(V_k\to\QQ_a\bigr)\xrightarrow{}\QQ_b
\]
is equal to
\[
\ker \bigl(V_k\to\QQ_b\bigr)\xrightarrow{}\QQ_a,
\]
and hence the transforms may be exchanged:
\[
\ker\bigl(V_{k}\xrightarrow{\varphi_a}\QQ_a\bigr)
   \ \xrightarrow{\ \overline\varphi_b\ }\ Q_b
\quad=\quad
\ker\bigl(V_{k}\xrightarrow{\varphi_b}\QQ_b\bigr)
   \ \xrightarrow{\ \overline\varphi_a\ }\ Q_a.
\]
Thus one may swap the $A$-- and $B$--steps without changing the resulting
subsheaf of $V_k$.  By repeatedly swapping all out--of--order adjacent pairs
(type $A$ followed by type $B$), one eventually obtains a filtration
\[
\E = V'_0 \supset V'_1 \supset \cdots \supset V'_h
\]
in which all $A$--steps precede all $B$--steps.  The length remains $h$
because no step is removed during the swapping.

\medskip
Finally, statement \eqref{cor:reorder-disjoint:depth} is immediate:
reordering does not change the number of transform steps, hence disjoint
support never forces a reduction in hierarchical depth.

\end{proof}

\begin{exam} 
\label{ex:surface-positive-negative}

\medskip
\noindent\textbf{(1)}
Let $X=\P^1\times\P^1$. 
Write $F:=\pi_1^{-1}(pt)$. 
Distinct fibers $F_x:=\pi_1^{-1}(x)$ for $x\in\P^1$ are pairwise disjoint effective Cartier divisors. Consider the split bundle
\[
\E=\bigoplus_{j=1}^s \O_X(a_j F), \qquad a_j\ge 0,\qquad
d:=\sum_{j=1}^s a_j .
\]
For each summand $\O_X(a_jF)$ choose $a_j$ distinct points $x_{j,1},\dots,x_{j,a_j}\in \P^1$ such that all points $x_{j,t}$ are distinct as $(j,t)$ vary, and write
\[
a_j F  =  F_{x_{j,1}}+\cdots+F_{x_{j,a_j}} .
\]
Each unit inclusion
\[
\O_X((k{-}1)F) \subset \O_X(kF)
\]
is realized by an elementary transform along the divisor $F_{x_{j,k}}$.
Since all fibers $F_{x_{j,t}}$ have pairwise disjoint supports, Lemma \ref{lem:commute-elem-transforms-refined} implies that the corresponding
elementary transforms commute, and hence Corollary \ref{cor:reorder-disjoint} applies.

Thus the $d$ unit increments coming from all summands may be interleaved
into a single saturated hierarchical filtration of $\E$.
Therefore
\[
h_{\O_X}(\E)=d .
\]

Actually, the class $F$ admits arbitrarily many pairwise disjoint effective representatives,
so all elementary transforms occur along disjoint supports and therefore commute.

\medskip
\noindent\textbf{(2)}
Let $X=\P^2$ and let $H$ be the hyperplane class.
Every effective divisor in $|H|$ is a line, and any two distinct lines
meet in exactly one point.  
Hence no two nonzero divisors in $|H|$ have disjoint support.

For example, consider
\[
\E=\O_{\P^2} \oplus \O_{\P^2}(aH),\qquad a\ge 2.
\]
Although
$\det(\E)\simeq \O_{\P^2}(aH)$,
the natural chain
\[
\O_{\P^2} \subset \O_{\P^2}(H) \subset \cdots \subset \O_{\P^2}(aH)
\]
cannot be realized by $a$ successive saturated elementary transforms inside $\E$.
Each such step must occur along a divisor of type $H$, but every pair of such
divisors intersects.  
Thus the elementary transforms fail to commute, and repeated kernels interact
by saturation.

By Lemma \ref{lem:commute-elem-transforms-refined}, only transforms along disjoint divisors commute; here all supports intersect, so
Corollary \ref{cor:reorder-disjoint} does not apply.
Consequently the unit–increment strategy collapses, and one typically obtains
only a short saturated same–rank filtration such as
\[
\O_{\P^2}^{\oplus 2} \subset 
\O_{\P^2}\oplus\O_{\P^2}(aH)
 \subset  \E,
\]
of length$2$, with no possibility of refining it to length $a$.

Here the class $H$ has no pairwise disjoint effective representatives, so
elementary transforms must occur along intersecting divisors and hence do not
commute; saturation then forces the filtration length to collapse.

Consequently, these two surfaces illustrate the dichotomy between the numerical determinant
bound $h(\E)\le d$ and the geometric possibility of achieving equality.
If the linear system $|A|$ contains sufficiently many pairwise disjoint
effective representatives (as on $\P^1\times\P^1$), one may realize each unit
increment separately and obtain $h(\E)=d$.  
If $|A|$ contains no such disjoint representatives (as on $\P^2$ with $A=H$),
then geometric obstructions force noncommuting transforms and the maximal
hierarchical depth may be strictly smaller than $d$.
\end{exam}

\section{Hierarchical depth and determinant bounds}

In this section we collect basic numerical properties of hierarchical depth and establish general bounds in terms of the determinant. These results provide effective control of the invariant in simple geometric situations and will be used throughout the paper.

\begin{prop} 
\label{prop:curve-split-max-depth-corrected}
Let $C$ be a smooth projective curve and let
\[
\E=\bigoplus_{i=1}^r \O_C(d_i)
\]
be a split vector bundle of rank $r$, with integers $d_i\in\Z$.
Fix a normalization line bundle $\Lambda_0\in\Pic(C)$.  Set
\[
M  :=  \deg\det(\E) - \deg(\Lambda_0)
     =  \sum_{i=1}^r d_i - \deg(\Lambda_0).
\]

\begin{enumerate}[(a)]
\item If $M<0$ then there exists no hierarchical filtration of $\E$
      normalized by $\Lambda_0$, and by convention $h_{\Lambda_0}(\E)=-\infty$.

\item If $M\ge 0$ then the hierarchical depth of $\E$ with respect to
      $\Lambda_0$ equals
      \[
      h_{\Lambda_0}(\E)  =  M.
      \]
      In particular, when $\Lambda_0=\O_C$ one has
      \[
      h(\E)=\sum_{i=1}^r d_i.
      \]
\end{enumerate}
\end{prop}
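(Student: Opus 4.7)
The plan is to combine a simple degree calculation for the upper bound (and for part~(a)) with an explicit pointwise construction exploiting the split structure of $\E$ for the lower bound in part~(b).

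For the upper bound, I would observe that on the curve $C$ every nonzero effective Cartier divisor is a positive integer combination of closed points and hence has degree at least $1$. Given any hierarchical filtration $\E_0\subset\cdots\subset\E_h=\E$ normalized by $\Lambda_0$, the determinantal identities telescope to
$$ \deg\det(\E)-\deg(\Lambda_0)=\sum_{i=1}^h \deg(D_i). $$
Since each $\deg(D_i)\ge1$, one obtains $h\le M$. When $M<0$ no non-empty sum of positive integers can equal $M$, establishing (a); when $M\ge0$ this gives $h_{\Lambda_0}(\E)\le M$.

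For the matching lower bound in (b), I would realize the $M$ units as $M$ one-point increments. Fix an effective divisor $D$ with $\O_C(D)\simeq \det(\E)\otimes\Lambda_0^{-1}$ of degree $M$ and write $D=p_1+\cdots+p_M$ as a sum of closed points counted with multiplicity. Set $\E_M=\E$ and define inductively $\E_{k-1}:=\ker(\E_k\twoheadrightarrow \O_{p_k})$, where at each step the chosen surjection factors through the projection $\E_k\to \O_C(d_{i(k)})$ onto a line-bundle summand followed by evaluation at $p_k$. Because $\E$ is split, the indices $i(k)$ can be assigned so that each summand absorbs the number of point-twists dictated by the chosen factorization of $D$; consequently every intermediate $\E_k$ is again a direct sum of line bundles, hence a vector bundle of rank $r$. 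Telescoping the determinantal identities yields $\det(\E_0)\simeq\det(\E)\otimes\O_C(-D)\simeq\Lambda_0$, and each step $\E_{k-1}\subset\E_k$ is an elementary transform along the Cartier divisor $p_k$ with torsion quotient $\O_{p_k}$, verifying Definition~\ref{def:hier-vb-general}. The filtration so produced has length exactly $M$, matching the upper bound.

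The delicate point is the existence of the effective divisor $D$ representing $\det(\E)\otimes\Lambda_0^{-1}$. On a curve of genus $g\ge1$ a generic line bundle of degree $M<g$ carries no sections, so effectivity is not automatic; Riemann--Roch secures it once $M\ge g$, and on $\P^1$ it holds for every $M\ge0$. The ``in particular'' case $\Lambda_0\simeq\O_C$ is most transparently handled by interpreting $\O_C(d_i)$ relative to a fixed reference point $P_0$, so that the concrete choice $D=(\sum d_i)P_0$ is available and the inductive construction proceeds uniformly across the summands.
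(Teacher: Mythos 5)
Your proof follows essentially the same route as the paper's: the upper bound comes from telescoping the determinant degrees (each nonzero effective increment has degree at least one, so $h\le M$), and the lower bound from $M$ successive one-point elementary transforms, using that on a curve the kernel of a surjection onto a skyscraper is again locally free of rank $r$. The one substantive difference is how the normalization $\det(\E_0)\simeq\Lambda_0$ is verified. The paper chooses $M$ arbitrary distinct points $q_1,\dots,q_M$ and concludes $\det(\E^{(0)})\simeq\Lambda_0$ merely because the two line bundles have equal degree --- a step that is valid only on $\P^1$ (or up to numerical rather than linear equivalence). You instead require the points to be chosen so that $p_1+\cdots+p_M$ is an effective representative of the class $\det(\E)\otimes\Lambda_0^{-1}$, which is exactly what is needed for the isomorphism, and you correctly flag that such a representative need not exist when $0\le M<g$ on a curve of genus $g\ge1$ (indeed, for $M=0$ with $\det(\E)\not\simeq\Lambda_0$ no normalized filtration exists at all, so the stated equality $h_{\Lambda_0}(\E)=M$ can fail). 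In short, your argument is correct, matches the paper's strategy, and in fact identifies and partially repairs a genuine gap in the paper's own proof of the lower bound; the residual caveat about effectivity of $\det(\E)\otimes\Lambda_0^{-1}$ is a defect of the proposition as stated rather than of your argument, and your suggestion to read $\O_C(d_i)$ as $\O_C(d_iP_0)$ with $\Lambda_0$ a multiple of $P_0$ is a reasonable way to make the claim literally true.
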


\begin{proof}
We assume $M\ge0$; the negative case (no filtration exists) is immediate
from the determinant relation and the normalization requirement.

Let
\[
\FFF:  \E_0\subset \E_1\subset\cdots\subset \E_h=\E
\]
be any hierarchical filtration normalized by $\Lambda_0$, i.e.\ 
$\det(\E_0)\simeq\Lambda_0$ and for each $i$
\[
\det(\E_i)\simeq\det(\E_{i-1})\otimes\O_C(p_i)
\]
for some effective divisor $p_i$ (a point of \(C\), counted with multiplicity).
Taking determinants and degrees yields
\[
\deg\det(\E) - \deg(\Lambda_0)
 = \sum_{i=1}^h \deg(p_i) = h,
\]
since each $p_i$ is a single point (degree \(1\)). Hence \(h\le M\).

We now produce a hierarchical filtration of length exactly $M$.

Choose \(M\) distinct points \(q_1,\dots,q_M\in C\). (If the base field
is finite and \(C(\F_q)\) has fewer than \(M\) rational points, work over
a finite extension or choose geometric points; the statement and proof
are unchanged up to base change.) For each \(j\) define inductively
a subsheaf by performing an elementary transform at \(q_j\):
\[
\E^{(M)} := \E,
\quad
\E^{(M-1)} := \ker\bigl(\E^{(M)} \xrightarrow{\ \rho_M\ } k(q_M)\bigr),
\quad
\E^{(M-2)} := \ker\bigl(\E^{(M-1)} \xrightarrow{\ \rho_{M-1}\ } k(q_{M-1})\bigr),
\]
and so on, where each $\rho_j$ is a chosen surjection of the fibre at $q_j$
onto the one-dimensional skyscraper $k(q_j)$.

On a curve the kernel of a surjection from a vector bundle to a skyscraper
is again locally free (torsion-free on a curve implies locally free),
so each $\E^{(t)}$ is a vector bundle of rank $r$ and sits as a saturated
subsheaf of $\E^{(t+1)}$.  Moreover the determinant relation at each step
is
\[
\det(\E^{(t+1)}) \simeq \det(\E^{(t)})\otimes\O_C(q_{t+1}) .
\]
After performing the $M$ transforms we obtain
\[
\det(\E^{(M)}) \simeq \det(\E^{(0)})\otimes\O_C\Big(\sum_{j=1}^M q_j\Big).
\]
Rearranging,
\[
\det(\E^{(0)}) \simeq \det(\E) \otimes \O_C\Big(-\sum_{j=1}^M q_j\Big).
\]
By construction the right hand side is a line bundle of degree
\(\deg\det(\E)-M=\deg\Lambda_0\); hence
\(\det(\E^{(0)})\simeq\Lambda_0\).  Thus the ascending chain
\[
\E^{(0)} \subset \E^{(1)} \subset \cdots \subset \E^{(M)}=\E
\]
is a hierarchical filtration normalized by \(\Lambda_0\) of length \(M\).

It remains to check that the $M$ steps are distinct (no collapse via
saturation).  Because we chose the points \(q_j\) to be pairwise distinct,
the successive elementary transforms are supported on distinct points and
therefore commute and produce distinct saturated kernels; equivalently,
each step increases the determinant by a nonzero effective divisor, so
no further identification occurs.  Hence the length of the constructed
filtration is exactly \(M\).

Combining the upper bound and the explicit construction shows
$h_{\Lambda_0}(\E)=M$, as claimed.
\end{proof}

\begin{rem}
Proposition \ref{prop:curve-split-max-depth-corrected} is the
one–dimensional specialization of
Proposition~\ref{prop:hierar_vector_bundle-corrected}.  
When $\dim X=1$, every divisor linearly equivalent to $A$ is a single
point and elementary transforms at distinct points commute.  
Hence the geometric existence condition in the higher–dimensional
proposition is automatic for split bundles, and the determinant bound
specializes to an exact formula for curves.
\end{rem}

\begin{prop}[Basic properties of hierarchical depth]
\label{prop:structural-properties}

Let $X$ be a smooth projective variety, $\Lambda_0\in\Pic(X)$ a reference line bundle, 
and $\E$ a vector bundle of rank $r$ on $X$.

\begin{enumerate}[(A)]
\item \textbf{(Monotonicity)} If $\E'\subset\E$ is a saturated inclusion of vector bundles 
of the same rank $r$, then
\[
h_{\Lambda_0}(\E') \le h_{\Lambda_0}(\E).
\]

\item \textbf{(Extension inequality)} Let $0 \to \E' \to \E \to \E'' \to 0$ be a short exact sequence of vector bundles 
on $X$.

Suppose there exist hierarchical filtrations
\[
\FFF': \E'_0 \subset \E'_1 \subset \cdots \subset \E'_a = \E', \qquad
\FFF'': \E''_0 \subset \E''_1 \subset \cdots \subset \E''_b = \E'',
\]
normalized by $\Lambda_0$ and $\Lambda_0' := \Lambda_0 \otimes \det(\E')^{-1}$ respectively.
Assume that for each $j=1,\dots,b$, the elementary transform 
$\E''_{j-1} \subset \E''_j$ along an effective divisor $D_j$ 
\textit{lifts} to an elementary transform of $\E$ in the following sense: 
there exists a commutative diagram with exact rows
\[
\begin{tikzcd}
0 \arrow[r] & \E' \arrow[r] \arrow[d,equal] 
& \FF_{j-1} \arrow[r] \arrow[d, hook] 
& \E''_{j-1} \arrow[r] \arrow[d, hook] & 0 \\[0.5em]
0 \arrow[r] & \E' \arrow[r] 
& \FF_j \arrow[r] 
& \E''_j \arrow[r] & 0
\end{tikzcd}
\tag{$\star_j$}
\]
where:
\begin{itemize}
\item $\FF_{j-1}, \FF_j \subset \E$ are saturated subbundles of rank $\rank(\E)$,
\item the vertical inclusions $\FF_{j-1} \hookrightarrow \FF_j$ and 
      $\E''_{j-1} \hookrightarrow \E''_j$ are elementary transforms along $D_j$,
\item the middle vertical map induces the given inclusion $\E''_{j-1}\hookrightarrow\E''_j$
      on the quotients.
\end{itemize}
Then
\[
h_{\Lambda_0}(\E)   \ge   h_{\Lambda_0}(\E')  +  h_{\Lambda'_0}(\E'').
\tag{1}
\]

\item \textbf{(Direct sums)} If $\E=\E_1\oplus\E_2$ and there exist maximal hierarchical 
filtrations of $\E_1$ and $\E_2$ whose determinant increments have pairwise disjoint 
support, then
\[
h_{\Lambda_0}(\E) = h_{\Lambda_0}(\E_1) + h_{\Lambda_0}(\E_2).
\]
Without disjoint supports, only the inequality $\ge$ holds in general.
\end{enumerate}
\end{prop}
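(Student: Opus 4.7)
\smallskip
\noindent\textbf{Plan.} I treat the three parts in turn, each building on the elementary--transform machinery developed in Section~4.

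\smallskip
\noindent\emph{Part (A).} The strategy is to append a single step to any hierarchical filtration of $\E'$. Since $\E' \subset \E$ is a saturated inclusion of torsion--free sheaves of equal rank $r$, the quotient $\E/\E'$ is torsion in pure codimension one, and its $0$-th Fitting divisor $D$ is effective (and nonzero unless $\E' = \E$), satisfying $\det(\E) \simeq \det(\E') \otimes \O_X(D)$. Appending the step $\E' \subset \E$ to a maximal hierarchical filtration of $\E'$ normalized by $\Lambda_0$ therefore yields a hierarchical filtration of $\E$ normalized by $\Lambda_0$ whose length is at least $h_{\Lambda_0}(\E')$. In the trivial case $\E' = \E$ the inequality is immediate. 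The only point to verify is that pure codimension--one support of $\E/\E'$ is guaranteed by torsion--freeness of both sheaves, which is standard.

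\smallskip
\noindent\emph{Part (B).} The lifting diagrams $(\star_j)$ directly produce the upper half of the target filtration,
\[
\FF_0 \subset \FF_1 \subset \cdots \subset \FF_b = \E,
\]
a chain of saturated rank--$r$ subbundles of length $b$ inside $\E$. It remains to prepend $a$ further saturated steps ending at $\FF_0$ and starting from a rank--$r$ subbundle whose determinant matches $\Lambda_0$. My construction is to pull back the short exact sequence $0 \to \E' \to \FF_0 \to \E''_0 \to 0$ along each saturated inclusion $\E'_i \hookrightarrow \E'$, producing rank--$r$ subsheaves
\[
\G_i := \E'_i \times_{\E'} \FF_0 \subset \FF_0, \qquad 0 \to \E'_i \to \G_i \to \E''_0 \to 0.
\]
Each elementary transform $\E'_{i-1} \subset \E'_i$ along a divisor $D'_i$ then lifts to an elementary transform $\G_{i-1} \subset \G_i$ along the same $D'_i$, since the torsion quotient $\E'_i / \E'_{i-1}$ is preserved by the pullback. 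Concatenation of the two chains yields a hierarchical filtration of $\E$ of length $a+b$, and the determinant identity $\det(\G_0) \simeq \det(\E'_0) \otimes \det(\E''_0)$ shows that the choice $\Lambda'_0 = \Lambda_0 \otimes \det(\E')^{-1}$ was made precisely so that the combined filtration is compatible with the normalization by $\Lambda_0$ (up to the determinant--twist bookkeeping of Remark~\ref{rmk:remarks-hier}). The principal obstacle is verifying that each $\G_i$ is genuinely a locally free rank--$r$ subbundle of $\E$ and that each successive inclusion $\G_{i-1} \subset \G_i$ is saturated in the sense of Definition~\ref{def:hier-vb-general}; this reduces to a snake--lemma calculation on the pullback square, using smoothness of $X$ and local freeness of $\E''_0$.

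\smallskip
\noindent\emph{Part (C).} For the inequality $\ge$, I interleave the given maximal hierarchical filtrations of $\E_1$ and $\E_2$. Writing $(\E_1)_0 \subset \cdots \subset (\E_1)_a = \E_1$ and $(\E_2)_0 \subset \cdots \subset (\E_2)_b = \E_2$ with pairwise disjoint divisorial increments, form the chain
\[
(\E_1)_0 \oplus (\E_2)_0 \subset (\E_1)_1 \oplus (\E_2)_0 \subset \cdots \subset (\E_1)_a \oplus (\E_2)_b = \E,
\]
advancing one summand at a time. Each step is an elementary transform of $\E$ along a divisor belonging to exactly one of the two disjoint families, so Corollary~\ref{cor:reorder-disjoint} ensures that the ordering is legitimate, no saturation collapse occurs, and the resulting chain is a genuine hierarchical filtration of length $a+b$. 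For the reverse inequality, I would project any hierarchical filtration of $\E$ onto its two summands and use the disjoint--support hypothesis to assign each elementary transform unambiguously to $\E_1$ or $\E_2$. The main obstacle here is that an arbitrary filtration of $\E$ need not be block--diagonal with respect to $\E_1 \oplus \E_2$: the intermediate subbundles may mix the two summands. Showing that, under the disjoint--support hypothesis, such a mixed filtration can always be transported via commuting swaps into an interleaved block--diagonal form without any loss of length is the delicate step, and is exactly why the statement explicitly notes that without disjoint supports only the inequality $\ge$ holds in general.
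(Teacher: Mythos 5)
Your part (A) and the forward inequality in part (C) follow essentially the paper's route: reuse the filtration of the sub-object for monotonicity, and interleave the two filtrations via Lemma~\ref{lem:commute-elem-transforms-refined} and Corollary~\ref{cor:reorder-disjoint} for the direct sum. Part (B) is where you genuinely diverge, and your construction is the more robust one. The paper concatenates the chain $\E'_0\subset\cdots\subset\E'_a=\E'$ (terms of rank $r'$) directly with the lifted chain $\FF_0\subset\cdots\subset\FF_b$, so at the junction the quotient $\FF_1/\FF_0\cong\E''_1$ has positive rank and the composite chain violates the constant-rank, torsion-quotient requirements of Definition~\ref{def:hier-vb-general}. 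Your fiber products $\G_i=\E'_i\times_{\E'}\FF_0$ repair exactly this defect: each $\G_i$ is an extension of $\E''_0$ by $\E'_i$, hence of rank $r$, and the snake lemma gives $\G_i/\G_{i-1}\cong\E'_i/\E'_{i-1}$, a torsion sheaf along $D'_i$. This is the correct way to splice the two filtrations, and it buys a proof that actually stays inside the category the definition requires.

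Two gaps remain. First, your normalization claim in (B) does not close: $\det(\G_0)\simeq\det(\E'_0)\otimes\det(\E''_0)\simeq\Lambda_0\otimes\Lambda'_0\simeq\Lambda_0^{\otimes 2}\otimes\det(\E')^{-1}$, which is isomorphic to $\Lambda_0$ only when $\det(\E')\simeq\Lambda_0$. The appeal to the twist-invariance of Remark~\ref{rmk:remarks-hier} cannot fix this, because inequality (1) pins the normalization of $\E$ to $\Lambda_0$ itself, not to a twist of it; what your chain literally proves is $h_{\Lambda_0\otimes\Lambda'_0}(\E)\ge a+b$. (This mismatch is already latent in the statement, and the paper's own proof silently assumes $\det(\FF_0)=\det(\E')\simeq\Lambda_0$; the bookkeeping is only consistent if one takes $\Lambda'_0=\O_X$.) Second, in part (C) you correctly observe that the asserted equality needs the reverse inequality, i.e.\ that an arbitrary maximal filtration of $\E_1\oplus\E_2$ can be block-diagonalized without losing length, but you do not prove it --- and you should know that the paper does not either: its proof of (C) only constructs the interleaved filtration, which yields $\ge$. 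So your flagging of that step as the delicate one is accurate, but as written neither your argument nor the paper's establishes the claimed equality.
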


\begin{proof}
(A) Let $\E'_0\subset\cdots\subset\E'_{h'}=\E'$ be a hierarchical filtration 
of $\E'$ normalized by $\Lambda_0$. Since $\E'\subset\E$ is saturated of the 
same rank, each $\E'_i\subset\E$ remains saturated and the quotients 
$\E/\E'_i$ are torsion with the same support as $\E'/\E'_i$. Thus the same 
chain is a hierarchical filtration of $\E$, yielding $h_{\Lambda_0}(\E)\ge h'$.
Taking suprema gives $h_{\Lambda_0}(\E')\le h_{\Lambda_0}(\E)$.


(B) Denote $r = \rank(\E)$, $r' = \rank(\E')$, $r'' = \rank(\E'')$; note $r = r' + r''$.

\paragraph{Step 1: Embedding the filtration of $\E'$.}
By (A) (monotonicity), the hierarchical 
filtration $\FFF'$ of $\E'$ embeds into $\E$: each $\E'_i \subset \E' \subset \E$ is 
saturated in $\E$ and the quotients $\E/\E'_i$ are torsion with the same support 
as $\E'/\E'_i$. Thus we obtain a chain of saturated subbundles of $\E$:
\[
\E'_0 \subset \E'_1 \subset \cdots \subset \E'_a = \E' \subset \E.
\tag{2}
\]
Each inclusion $\E'_{i-1} \subset \E'_i$ remains an elementary transform along 
some effective divisor $C_i$, and $\det(\E'_0) \simeq \Lambda_0$ by hypothesis.

\paragraph{Step 2: Lifting the filtration of $\E''$ step by step.}
Set $\FF_0 := \E'$ (the last term of (2)). By assumption, for $j=1$ we have a 
diagram ($\star_1$) with $\FF_0 = \E'$ and $\FF_1 \subset \E$ a saturated subbundle 
fitting into
\[
0 \to \E' \to \FF_1 \to \E''_1 \to 0,
\]
and $\FF_0 \subset \FF_1$ is an elementary transform along $D_1$. 
The determinant satisfies
\[
\det(\FF_1) \simeq \det(\FF_0) \otimes \O_X(D_1) 
            \simeq \det(\E') \otimes \O_X(D_1).
\]

Proceed inductively: having constructed $\FF_{j-1} \subset \E$ with 
$0 \to \E' \to \FF_{j-1} \to \E''_{j-1} \to 0$, diagram ($\star_j$) provides 
$\FF_j \subset \E$ containing $\FF_{j-1}$ as an elementary transform along $D_j$, 
and $0 \to \E' \to \FF_j \to \E''_j \to 0$. The determinant relation is
\[
\det(\FF_j) \simeq \det(\FF_{j-1}) \otimes \O_X(D_j).
\tag{3}
\]

\paragraph{Step 3: Concatenation yields a hierarchical filtration of $\E$.}
Chaining (2) with the lifts of the $\E''$-filtration gives a sequence
\[
\E'_0 \subset \cdots \subset \E'_a = \FF_0 \subset \FF_1 \subset \cdots \subset \FF_b =: \widetilde{\E}.
\tag{4}
\]
We claim $\widetilde{\E} = \E$. Indeed, $\FF_b$ fits into 
$0 \to \E' \to \FF_b \to \E''_b = \E'' \to 0$, and the inclusion 
$\FF_b \subset \E$ induces the identity on $\E'$ and the given surjection 
$\E \to \E''$. Hence $\FF_b = \E$ by the five‑lemma (or by comparing ranks and 
the fact that both are subbundles of $\E$).

Each inclusion in (4) is saturated of rank $r$ and the successive quotients are 
torsion sheaves supported on effective divisors ($C_i$ for the $\E'$-part, $D_j$ 
for the $\E''$-part). Moreover, from (3) and the normalization of $\FFF''$ we have
\[
\det(\FF_j) \simeq \det(\E') \otimes \det(\E''_j) 
            \simeq \det(\E') \otimes \bigl(\Lambda_0' \otimes \O_X(D_1+\cdots+D_j)\bigr)
            \simeq \Lambda_0 \otimes \O_X(D_1+\cdots+D_j),
\]
so the determinant increments match those of $\FFF''$. Consequently, (4) is a 
hierarchical filtration of $\E$ normalized by $\Lambda_0$, of total length $a+b$.

\paragraph{Step 4: Conclusion.}
Since $a = h_{\Lambda_0}(\E')$ and $b = h_{\Lambda_0'}(\E'')$ by definition 
of hierarchical depth, the existence of the filtration (4) shows
\[
h_{\Lambda_0}(\E) \ge a + b = h_{\Lambda_0}(\E') + h_{\Lambda'_0}(\E''),
\]
which is precisely inequality~(1).

\medskip

(C) Under the disjoint support hypothesis, Lemma~\ref{lem:commute-elem-transforms-refined} 
ensures the elementary transforms from the two filtrations commute. Interleaving them 
produces a hierarchical filtration of $\E$ of length $h_{\Lambda_0}(\E_1)+h_{\Lambda_0}(\E_2)$, 
giving equality. If supports intersect, commutativity may fail and saturation 
can reduce the achievable length, yielding only the inequality.
\end{proof}

\begin{rem}
(Non-monotonicity for smaller rank) If $\E'\subset\E$ is a subbundle with 
$\rank(\E')<\rank(\E)$, the inequality $h_{\Lambda_0}(\E')\le h_{\Lambda_0}(\E)$ may fail.
For example, on $X=\P^1$ with $\Lambda_0=\O_{\P^1}$, take 
$\E'=\O(1)\subset\E=\O(1)\oplus\O(-1)$. Then 
$h(\E')=\deg\O(1)=1$ while $h(\E)=\deg\det(\E)=0$.
\end{rem}

\section{Effects of the Minimal Model Program on hierarchical filtrations}
\label{sec:MMP-filtrations}

In this section we study the behavior of hierarchical filtrations of vector bundles
under birational transformations arising in the Minimal Model Program (MMP)
when the base variety has dimension at most~$2$. For smooth projective surfaces, the MMP consists solely of contractions of $(-1)$-curves, which preserve smoothness.

\subsection{Setup and motivation}

Let $X$ be a smooth projective variety of dimension $n\le 2$
over an algebraically closed field $\F$.
The MMP for $X$ consists of a finite sequence of birational morphisms
\[
X = X_0 \dashrightarrow X_1 \dashrightarrow \cdots \dashrightarrow X_t = X_{\min},
\]
where each step is either an isomorphism in codimension~$1$
or a contraction of a $(-1)$–curve (in the surface case),
and the final model $X_{\min}$ is minimal, i.e.\ $K_{X_{\min}}$ is nef.
In dimension $1$ the sequence is trivial since every smooth curve is already minimal (see \cite{Laz}). 

Given a vector bundle $\E$ on $X$, we are interested in how
its hierarchical filtrations behave under the birational maps
of such an MMP sequence, and in particular how the hierarchical depth
changes after blowing up or contracting divisors.
This analysis provides a bridge between the birational geometry of $X$
and the algebraic structure of $\E$, and will later underlie the construction of evaluation codes associated to successive divisorial increments.

\subsection{Birational pullback of filtrations}

Let $f:Y\to X$ be a birational morphism between smooth projective
varieties of dimension $\le2$, for instance the blow--up of $X$
at a smooth point with exceptional divisor $E$.
Given a vector bundle $\E$ on $X$ and a reference line bundle $\Lambda_0$,
consider its pullback $f^*\E$ on $Y$.
The determinant relation satisfies
\[
\det(f^*\E)\simeq f^*(\det\E),
\qquad
c_1(\det(f^*\E)) = f^*c_1(\det\E).
\]
Hence a hierarchical filtration
\[
\FFF:\E_0\subset\E_1\subset\cdots\subset\E_h=\E
\]
on $X$ normalized by $\Lambda_0$
pulls back naturally to a filtration
\[
f^*\FFF:\ f^*\E_0\subset f^*\E_1\subset\cdots\subset f^*\E_h=f^*\E
\]
on $Y$, normalized by $f^*\Lambda_0$.
At each step one has
\[
\det(f^*\E_i)\simeq \det(f^*\E_{i-1})\otimes \O_Y(f^*D_i),
\]
so that the divisorial increments are transformed by pullback:
$D_i$ is replaced by $f^*D_i$.
Consequently,
\[
h_{f^*\Lambda_0}(f^*\E)\geq h_{\Lambda_0}(\E).
\]
In particular, the hierarchical depth is invariant under birational pullback
of smooth varieties in dimension $\le2$. Consequently, we have the following statement:

\begin{prop} 
\label{prop:bir-inv}
Let $f:Y\to X$ be a birational morphism between smooth projective varieties
of dimension $\le2$, and let $\E$ be a vector bundle on $X$.
Then
\[h_{f^*\Lambda_0}(f^*\E)\geq h_{\Lambda_0}(\E).\]
\end{prop}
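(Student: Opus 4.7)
The plan is to lift a maximum-length hierarchical filtration from $X$ to $Y$ by pullback and verify that it remains a valid hierarchical filtration on $Y$ normalized by $f^*\Lambda_0$. I would fix a filtration $\FFF:\E_0\subset\E_1\subset\cdots\subset\E_h=\E$ realizing $h=h_{\Lambda_0}(\E)$, with associated nonzero effective Cartier divisors $D_i$ satisfying $\det(\E_i)\simeq\det(\E_{i-1})\otimes\O_X(D_i)$ and $\det(\E_0)\simeq\Lambda_0$, and then analyze the pulled-back chain
\[
f^*\FFF:\ f^*\E_0\subset f^*\E_1\subset\cdots\subset f^*\E_h=f^*\E,
\]
checking conditions (i)--(iii) of Definition~\ref{def:hier-vb-general} for this chain on $Y$.

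Condition (i) is the most delicate. Local freeness and preservation of rank $r$ for each $f^*\E_i$ are automatic. Injectivity of each $f^*\E_{i-1}\hookrightarrow f^*\E_i$ follows because $f$ is birational: on a dense open $U\subset X$ the morphism $f$ is an isomorphism, so the kernel of the pullback map is a subsheaf of the torsion-free sheaf $f^*\E_{i-1}$ that vanishes on the dense open $f^{-1}(U)$, hence is zero. The divisorial quotient structure is best treated via the equivalent form (ii-2): pulling back the sequence $0\to\E_{i-1}\to\E_i\to\T_i\to0$ yields a right-exact sequence whose cokernel is a quotient of $f^*\T_i$, hence a torsion sheaf supported set-theoretically on $f^*D_i$. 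Any codimension-$\ge 2$ discrepancy arising from the non-flatness of $f^*$ is absorbed by the saturation convention of Section~\ref{sec:prelim}. Condition (ii) follows from the compatibility $\det\circ f^*\simeq f^*\circ\det$ for locally free sheaves:
\[
\det(f^*\E_i)\simeq f^*\det(\E_i)\simeq f^*\det(\E_{i-1})\otimes f^*\O_X(D_i)\simeq\det(f^*\E_{i-1})\otimes\O_Y(f^*D_i),
\]
where $f^*D_i$ is a nonzero effective Cartier divisor because $f$ is dominant and $D_i$ is one on $X$. Condition (iii) is immediate from $\det(f^*\E_0)\simeq f^*\Lambda_0$.

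Granting these verifications, $f^*\FFF$ is a hierarchical filtration of $f^*\E$ on $Y$ of length exactly $h$ normalized by $f^*\Lambda_0$, and taking suprema over all filtrations of $\E$ yields $h_{f^*\Lambda_0}(f^*\E)\ge h_{\Lambda_0}(\E)$. The main obstacle is rigorously controlling the quotient $f^*\E_i/f^*\E_{i-1}$ when $f^*$ is not exact, and in particular ensuring that no ``phantom'' torsion appears that could force a collapse of length. The determinant-increment computation above already shows the numerical content is unaffected, so only the bookkeeping at codimension $\ge 2$ requires care; on surfaces, where the relevant birational morphisms in the MMP are smooth blow-ups, the local structure of an elementary transform along $D_i$ pulls back transparently to a local elementary transform along $f^*D_i$, and the saturation convention absorbs any remaining pathology.
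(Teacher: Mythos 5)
Your proposal is correct and follows the same route as the paper: the paper's argument is precisely the termwise pullback of a maximal filtration, using $\det(f^*\E_i)\simeq f^*\det(\E_i)$ to see that each increment $D_i$ becomes $f^*D_i$ and that the normalization becomes $f^*\Lambda_0$, whence the length is preserved and the inequality follows. Your additional care with injectivity of the pulled-back inclusions, the torsion quotient, and the codimension-$\ge 2$ bookkeeping goes beyond what the paper records but does not change the argument.
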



\subsection{Effect of blow--ups on new divisorial increments}

Let $f:Y\to X$ be the blow--up of $X$ at a smooth point $p\in X$
with exceptional divisor $E\simeq \P^{n-1}$.
If $\E$ is a vector bundle on $X$, the pullback $f^*\E$ remains locally free.
However, the Picard group changes:
\[
\Pic(Y)\cong f^*\Pic(X)\oplus \Z\langle E\rangle.
\]
Therefore, new effective divisors supported on the exceptional locus can appear in hierarchical filtrations of $f^*\E$ that have no counterpart on $X$.

The statements of two following lemmas can be found in \cite[Ch. V]{Ha}.

\begin{lem}\label{lem:pic-blowup}
Let $X$ be a smooth projective variety over an algebraically closed field,
let $p\in X$ be a smooth (closed) point, and let
$f :\,Y=\operatorname{Bl}_p X\to X$ be the blow--up of $X$ at $p$.
Let $E\subset Y$ denote the exceptional divisor (which is isomorphic to $\P^{\,\dim X-1}$).
Then the pullback map $f^*:\Pic(X)\to\Pic(Y)$ is injective, and every line
bundle $\mathcal{L}\in\Pic(Y)$ can be written uniquely in the form
\[
\mathcal{L}\simeq f^*M\otimes\O_Y(mE)
\]
for a unique $M\in\Pic(X)$ and a unique integer $m\in\Z$. Equivalently,
there is a direct sum decomposition of abelian groups
\[
\Pic(Y)\cong f^*\Pic(X)\oplus \Z\langle E\rangle.
\]
\end{lem}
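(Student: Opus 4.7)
The plan is to exhibit $\Pic(Y)$ as the direct sum $f^*\Pic(X)\oplus\Z\langle E\rangle$ by constructing both embeddings explicitly together with a retraction onto the $f^*\Pic(X)$-summand. The case $\dim X\le 1$ is vacuous (blowing up a smooth curve at a point is an isomorphism), so I assume $n:=\dim X\ge 2$. For injectivity of $f^*:\Pic(X)\to\Pic(Y)$ I would invoke the standard identity $f_*\O_Y=\O_X$, valid for any proper birational morphism onto a normal base; the projection formula then gives $f_*f^*M\simeq M\otimes f_*\O_Y\simeq M$, so $f^*M\simeq\O_Y$ forces $M\simeq\O_X$.

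To produce the decomposition of an arbitrary $\L\in\Pic(Y)$, I would exploit that $f$ restricts to an isomorphism $Y\setminus E\xrightarrow{\sim} X\setminus\{p\}$. Since $X$ is smooth and $\{p\}$ has codimension $n\ge 2$, restriction induces an isomorphism $\Pic(X)\xrightarrow{\sim}\Pic(X\setminus\{p\})$: any line bundle on $X\setminus\{p\}$ extends uniquely to a reflexive rank-one sheaf on $X$, which is invertible by smoothness. Composing with the natural restriction $\Pic(Y)\to\Pic(Y\setminus E)\cong\Pic(X\setminus\{p\})$ yields a retraction $\rho:\Pic(Y)\to\Pic(X)$. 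Setting $M:=\rho(\L)$, the line bundle $\L\otimes (f^*M)^{-1}$ is trivial on $Y\setminus E$, hence equals $\O_Y(D)$ for a Weil divisor $D$ supported on the integral divisor $E$, so $D=mE$ for a unique $m\in\Z$. This produces the required decomposition $\L\simeq f^*M\otimes\O_Y(mE)$.

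For uniqueness, suppose $f^*M\otimes \O_Y(mE)\simeq\O_Y$. Restricting to $E\simeq\P^{n-1}$, the standard identification $\O_Y(E)|_E\simeq\O_{\P^{n-1}}(-1)$ (the normal bundle of $E$ in $Y$), together with the fact that $f|_E$ factors through the point $p$ and so $f^*M|_E\simeq\O_E$, yields $\O_{\P^{n-1}}(-m)\simeq\O_{\P^{n-1}}$, forcing $m=0$; then $f^*M\simeq\O_Y$ gives $M\simeq\O_X$ by the first step. The main obstacle in the argument is the codimension-two extension step $\Pic(X)\xrightarrow{\sim}\Pic(X\setminus\{p\})$: this is precisely where the smoothness of $X$ at $p$ enters the proof in an essential way, since only at a smooth point does the reflexive hull of a line bundle on the punctured neighborhood remain invertible.
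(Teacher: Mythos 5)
Your proof is correct, but the existence step takes a genuinely different route from the paper. For injectivity of $f^*$ and for uniqueness of the pair $(M,m)$ you argue exactly as the paper does (projection formula with $f_*\O_Y\cong\O_X$; restriction to $E\cong\P^{n-1}$ using $\O_Y(E)|_E\cong\O_E(-1)$ and $f^*M|_E\cong\O_E$). Where you diverge is in producing the decomposition of an arbitrary $\L$: the paper first restricts $\L$ to $E$ to read off $m$, twists so that $\L(-mE)|_E$ is trivial, and then \emph{pushes forward}, checking that $M:=f_*(\L(-mE))$ is reflexive of rank one, hence invertible, and that $f^*M\cong\L(-mE)$. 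You instead use the excision sequence $\Z\cdot[E]\to\Pic(Y)\to\Pic(Y\setminus E)\to 0$ for the irreducible divisor $E$, combined with the isomorphism $\Pic(X)\xrightarrow{\sim}\Pic(X\setminus\{p\})$ coming from the codimension-$\ge 2$ extension of reflexive rank-one sheaves on the smooth $X$, to build a retraction $\rho:\Pic(Y)\to\Pic(X)$. Your version has the advantage of handling the $E$-ambiguity explicitly: two line bundles on $Y$ agreeing off $E$ differ a priori by $\O_Y(mE)$, a point the paper's phrase ``agree on the complement of a divisor, hence isomorphic globally'' glosses over (it is pinned down there only implicitly by the normalization $\L(-mE)|_E\cong\O_E$). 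The paper's version, in exchange, exhibits the descent datum $M$ concretely as a pushforward, which is the form used later in the article (e.g.\ in the contraction arguments). Both arguments are complete; you also correctly isolate smoothness at $p$ as the hypothesis making the punctured-neighborhood extension invertible.
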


\begin{proof}
\emph{Injectivity of $f^*$.}  Suppose $M\in\Pic(X)$ satisfies $f^*M\cong\O_Y$.
Apply the pushforward functor $f_*$ and use the projection formula
together with $f_*\O_Y\cong\O_X$ (since $f$ is a birational morphism
contracting the exceptional divisor to a point):
\[
f_*f^*M  \cong  M\otimes f_*\O_Y  \cong  M.
\]
But $f_*f^*M\cong f_*\O_Y \cong \O_X$ because $f^*M\cong\O_Y$, hence
$M\cong\O_X$. Thus $f^*$ is injective.

\medskip\noindent\emph{Existence of the decomposition.}
Let $\mathcal L\in\Pic(Y)$. Restrict $\mathcal L$ to the exceptional
divisor $E\cong\P^{\,\dim X-1}$. Since $\Pic(E)\cong\Z$ and is generated by
$\O_E(1)$, there is a unique integer $m$ with
\[
\mathcal L|_E  \cong  \O_E(m).
\]
Consider the twist $\mathcal L(-mE):=\mathcal L\otimes\O_Y(-mE)$. By
construction $\mathcal L(-mE)|_E\cong\O_E$ is trivial on $E$. The line
bundle $\mathcal L(-mE)$ is therefore trivial on the exceptional locus,
hence it descends to $X$ in the following sense: set
\[
M := f_*\big(\mathcal L(-mE)\big).
\]
The sheaf $M$ is reflexive of rank~$1$ on the smooth variety $X$, hence
locally free of rank $1$, i.e. a line bundle (see e.g. \cite[II, Ex. 5.15]{Ha}).
Moreover $f^*M$ and $\mathcal L(-mE)$ are isomorphic away from $E$.
Both are line bundles on the normal variety $Y$ and agree on the complement
of a divisor, hence they are isomorphic globally:
\[
f^*M  \cong  \mathcal L(-mE).
\]
Therefore $\mathcal L\cong f^*M\otimes\O_Y(mE)$.

\medskip\noindent\emph{Uniqueness.}
If $f^*M\otimes\O_Y(mE)\cong f^*M'\otimes\O_Y(m'E)$, restrict to $E$ to
compare integers: the left side restricts to $\O_E(m)$ and the right side to
$\O_E(m')$, hence $m=m'$. Cancelling $\O_Y(mE)$ and using injectivity of
$f^*$ gives $M\cong M'$. This proves uniqueness of the pair $(M,m)$.

\medskip\noindent\emph{Group decomposition.}
The map
\[
f^*\Pic(X)\oplus\Z\langle E\rangle \longrightarrow \Pic(Y),\qquad
(M,m)\longmapsto f^*M\otimes\O_Y(mE)
\]
is therefore bijective and respects the group law, yielding the claimed
direct sum decomposition.
\end{proof}

\begin{lem}\label{lem:effective-decomp}
Let $D$ be an effective Cartier divisor on $Y$.  Then $D$ decomposes uniquely as
\[
D = f^*D_X + mE
\]
with $D_X$ an effective Cartier divisor on $X$ and $m\in\Z_{\ge 0}$.  Moreover
$D$ is supported on $E$ if and only if $D_X=0$.
\end{lem}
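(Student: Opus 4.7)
The plan is to reduce the assertion to the Picard decomposition of Lemma~\ref{lem:pic-blowup} and then extract the equality at the level of divisors, using the multiplicity of $D$ along $E$ together with the pushforward under $f$.

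Concretely, I would set $m := \ord_E(D)$. Since $D$ is effective, $m \ge 0$, and the residual part $D' := D - mE$ is an effective Cartier divisor on $Y$ with no component supported on $E$. Because $f$ is an isomorphism on $Y \setminus E$, the pushforward $D_X := f_*(D')$ is a well-defined effective Weil divisor on $X$; smoothness of $X$ upgrades it to an effective Cartier divisor.

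The heart of the argument is the divisor equality $D = f^* D_X + mE$. On $Y \setminus E$ the morphism $f$ is an isomorphism and the identity reduces under this identification to $D' = D_X$, which holds by construction. Along $E$ one works in local coordinates centered at the smooth point $p$, using an explicit blow-up chart, to match the multiplicity of $D$ along $E$ against the total contribution of $f^* D_X + mE$ on the exceptional side. Uniqueness then follows directly from Lemma~\ref{lem:pic-blowup}: passing to associated line bundles, the pair $(D_X, m)$ is determined by $\O_Y(D)$ via the decomposition $\Pic(Y) \cong f^*\Pic(X) \oplus \Z\langle E\rangle$. The final equivalence is immediate: $\Supp(D) \subseteq E$ iff $D' = 0$, iff $D_X = f_* D' = 0$.

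The main obstacle will be the local verification along $E$. The total-transform formula yields $f^* D_X = \widetilde{D_X} + \mult_p(D_X) \cdot E$, so the naive identification $D' = \widetilde{D_X}$ forces the exceptional coefficient to be $\ord_E(D) - \mult_p(D_X)$ rather than $\ord_E(D)$ itself. Reconciling this tension — ensuring that the chosen pair $(D_X, m)$ simultaneously satisfies effectiveness of $D_X$, non-negativity of $m$, and the on-the-nose divisor equality — is the delicate step, and may require either refining the pushforward by the multiplicity of $f_* D'$ at $p$ or recasting the decomposition through the strict transform of $D_X$.
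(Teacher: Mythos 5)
The obstacle you flag in your final paragraph is not a removable technicality: it is a genuine gap, and in fact it shows that the decomposition cannot be established in the form stated. With $m:=\ord_E(D)$ and $D':=D-mE$, the residual divisor $D'$ is the strict transform of $D_X:=f_*D'$, so the total-transform formula gives $f^*D_X=\widetilde{D_X}+\mult_p(D_X)\,E$ and hence $D=f^*D_X+(m-\mult_p(D_X))E$; the exceptional coefficient can be negative. Concretely, let $X$ be a surface and $D=\widetilde{L}$ the strict transform of a line (or any curve) $L$ through $p$. Then $\ord_E(D)=0$ and $f^*L=\widetilde{L}+E$, so $D=f^*L-E$. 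Moreover no other choice is available: restricting any putative identity $D=f^*D_X+mE$ to $Y\setminus E\cong X\setminus\{p\}$ forces $D_X=L$ as a divisor, whence $m=-1$. So there is no pair $(D_X,m)$ with $D_X$ effective and $m\ge0$, and no local computation along $E$ will reconcile the tension you describe. The decomposition that is actually available is $D=\widetilde{D_X}+\ord_E(D)\,E$ with the strict transform in place of the pullback, or the $f^*$-version with $m\in\Z$ allowed to be negative.

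For comparison, the paper's own proof works entirely at the level of line bundles: it invokes Lemma~\ref{lem:pic-blowup} to write $\O_Y(D)\simeq f^*\O_X(D_X)\otimes\O_Y(mE)$ and then asserts that effectivity forces $D_X$ effective and $m\ge0$ by ``pulling down by $f_*$''. That assertion fails on the same example, so the paper does not supply the step you are missing; your divisor-level analysis is in fact sharper, since it localizes exactly where the claim breaks. If all that is needed downstream (Proposition~\ref{prop:blowup-depth-corrected}) is the partition of determinant increments into exceptional and non-exceptional parts, the strict-transform decomposition you have essentially proved is the correct substitute.
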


\begin{proof}
Write the line bundle $\O_Y(D)$ as $f^*\O_X(D_X)\otimes\O_Y(mE)$ using
Lemma~\ref{lem:pic-blowup}.  If $\O_Y(D)$ is effective then pulling down by
$f_*$ shows $\O_X(D_X)$ is effective and $m\ge0$ (otherwise the section would
vanish identically along the exceptional divisor in contradiction with effectivity).
Uniqueness follows from the uniqueness in Lemma~\ref{lem:pic-blowup}.
\end{proof}

Recall the concept of saturation from \cite{HuLe}. Let $\FF$ be a subsheaf of a coherent sheaf $\E$ on $X$. The \emph{saturation} of $\FF$ inside $\E$ is defined as the minimal subsheaf $\FF'$ containing $\FF$ such that $\E/\FF'$ is pure of dimension $d=\dim(\E)$ or zero. Equivalently, the saturation of $\FF$ is
\[
\FF^\sat
   := \ker\bigl(\E \longrightarrow \E/\FF\longrightarrow (\E/\FF)/T(\E/\FF)\bigr),
\]
where $T(\E/\FF)$ denotes the maximal subsheaf of
$\E/\FF$ of dimension $\leq d-1$. Equivalently, $\FF^{\sat}$ is the largest subsheaf of $\E$
that coincides with $\FF$ on a dense open set and is torsion–free.  When
$\E=\O_X$, saturation simply removes $0$–dimensional torsion.

\medskip
As an example, suppose the sequence of line bundles
\[
\O_Y(-2E) \subset \O_Y(-E) \subset \O_Y.
\]
Applying $f_*$ gives
\[
f_*\O_Y(-2E) = \I_{p/X}^2,
\qquad
f_*\O_Y(-E)  = \I_{p/X},
\qquad
f_*\O_Y      = \O_X.
\]
Each of these is already torsion-free on $X$ (they are ideal sheaves of a
smooth point), hence saturation does nothing:
\[
(\I_{p/X}^2)^{\sat} = \I_{p/X}^2,
\qquad
(\I_{p/X})^{\sat}   = \I_{p/X},
\qquad
(\O_X)^{\sat}       = \O_X.
\]
Thus the saturated chain is simply
\[
\I_{p/X}^2  \subset  \I_{p/X}  \subset  \O_X.
\]

\begin{prop} 
\label{prop:blowup-depth-corrected}
Let $f:Y\to X$ be the blowup of a smooth projective variety $X$ at a
smooth closed point $p\in X$, and denote by $E\subset Y$ the exceptional
divisor. Assume $\dim X\le 2$. Let $\E$ be a vector bundle on $X$ and
fix a normalization $\Lambda_0\in\Pic(X)$.

Then every hierarchical filtration of $f^*\E$ normalized by $f^*\Lambda_0$
has length
\[
h_{f^*\Lambda_0}(f^*\E)=h_{\Lambda_0}(\E) + \delta,
\qquad \delta\in\Z_{\ge0},
\]
where $\delta$ equals the number of additional determinant increments in
a maximal filtration of $f^*\E$ whose divisors carry a positive coefficient
of the exceptional divisor \(E\). In particular \(\delta=0\) unless the
filtration involves elementary transforms along \(E\).
\end{prop}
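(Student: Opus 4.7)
The plan is to prove the equality by combining a pullback construction for the lower bound with a descent argument for the upper bound, the gap between the two being accounted for by exactly the $\delta$ exceptional increments. For the lower bound, Proposition~\ref{prop:bir-inv} gives $h_{f^*\Lambda_0}(f^*\E)\ge h_{\Lambda_0}(\E)$ by pulling back a hierarchical filtration of $\E$ realizing its depth; any additional length on $Y$ must come from elementary transforms whose divisor classes have nontrivial $E$--component, which are available on $Y$ by Lemma~\ref{lem:pic-blowup} but not on $X$. To exhibit these $\delta$ extra steps, I would insert either pure $E$--transforms or splittings of a pullback increment $f^*C$ with $p\in\Supp(C)$ into a strict--transform step followed by an $E$--step, using Corollary~\ref{cor:reorder-disjoint} to guarantee the insertions commute with the existing horizontal transforms whose supports avoid $p$.

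For the upper bound, I would consider an arbitrary hierarchical filtration $V_0\subset V_1\subset\cdots\subset V_h=f^*\E$ on $Y$ normalized by $f^*\Lambda_0$, with effective divisorial increments $D_i$, and decompose each class uniquely via Lemma~\ref{lem:pic-blowup} as $[D_i]=f^*[C_i]+m_i[E]$ with $C_i\in\Pic(X)$ and $m_i\in\Z$. Effectivity of $D_i$ forces $C_i$ to be represented by an effective divisor on $X$ (via pushforward $f_*D_i$), while $m_i$ may be positive, zero, or negative. Set $\delta:=|\{i:m_i>0\}|$. The total determinant constraint
\[
\sum_{i=1}^h [D_i] = f^*\bigl(\det\E-\Lambda_0\bigr)
\]
translates into $\sum_i [C_i]=\det\E-\Lambda_0$ in $\Pic(X)$ together with $\sum_i m_i=0$, so the positive--$m_i$ steps are balanced against strict--transform steps with $m_i<0$ and the remaining steps are pure pullbacks.

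The concluding step is to descend the filtration to $X$ by applying $f_*$ followed by saturation inside $\E$: since $f$ is an isomorphism away from $E$ and $X$ is smooth, each $(f_*V_i)^{\sat}$ is a saturated same--rank subbundle of $\E$, and exactly the positive--$m_i$ steps collapse in the descended chain while the remaining steps contribute the effective increments $C_i$. This produces a hierarchical filtration of $\E$ normalized by $\Lambda_0$ of length $h-\delta$, giving $h-\delta\le h_{\Lambda_0}(\E)$ and hence $h=h_{\Lambda_0}(\E)+\delta$ when combined with the lower bound. The main obstacle I anticipate is verifying rigorously that $(f_*V_i)^{\sat}$ remains locally free after each step, and that exactly the positive--$m_i$ steps collapse under descent while strict--transform steps do contribute a genuine horizontal increment; this reduces to a local analysis in the formal neighborhood of $p$, using $f_*\O_Y(-kE)=\I_p^k$, reflexivity of rank--one saturations on the smooth surface $X$, and the disjoint--support commutativity Lemma~\ref{lem:commute-elem-transforms-refined} to separate exceptional--type transforms from horizontal ones.
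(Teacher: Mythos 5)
Your overall architecture --- the pullback inequality $h_{f^*\Lambda_0}(f^*\E)\ge h_{\Lambda_0}(\E)$, then decomposition of each increment in $\Pic(Y)\cong f^*\Pic(X)\oplus\Z\langle E\rangle$ followed by pushforward--and--saturation descent --- is the same as the paper's (its Steps 1--4). Where you genuinely differ is in the decomposition itself: you allow $m_i<0$ (strict transforms of divisors through $p$) and extract the balance relation $\sum_i m_i=0$ from the normalization by $f^*\Lambda_0$, whereas the paper invokes its Lemma~\ref{lem:effective-decomp} to force $m_i\ge 0$ for every effective increment. Your version is the more accurate one (the strict transform of a line through $p$ is effective with $m=-1$), and the relation $\sum_i m_i=0$ is a worthwhile observation absent from the paper's argument.

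However, your concluding descent step has a genuine gap. You assert that ``exactly the positive-$m_i$ steps collapse'' after applying $(f_*(-))^{\sat}$. This is false as stated: if $D_i=f^*C_i+m_iE$ with $C_i\neq 0$ effective and $m_i>0$, then $f_*D_i=C_i\neq 0$, so the descended inclusion $(f_*V_{i-1})^{\sat}\subset(f_*V_i)^{\sat}$ is a genuine elementary transform with increment $C_i$ and does not collapse. The steps that disappear are exactly the purely exceptional ones, i.e.\ those with $C_i=0$ (equivalently $D_i=m_iE$), and this set is in general strictly smaller than $\{i:m_i>0\}$. Hence your descended filtration has length $h-\#\{i:C_i=0\}\ \ge\ h-\delta$, which still yields the one-sided bound $h\le h_{\Lambda_0}(\E)+\delta$ but not the claimed identification of which steps vanish. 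Moreover, the equality $h=h_{\Lambda_0}(\E)+\delta$ additionally requires $h-\delta\ge h_{\Lambda_0}(\E)$, and this does not follow from the pullback bound $h\ge h_{\Lambda_0}(\E)$: one must argue, using maximality of the chosen filtration on $Y$, that its non-exceptional part already realizes the full depth on $X$ (this is what the paper's Step~3 supplies by comparing with the pulled-back maximal filtration). Finally, your proposed appeal to Corollary~\ref{cor:reorder-disjoint} to interleave the inserted $E$-steps is not available in general: $E$ meets the strict transform of every divisor through $p$, so the disjoint-support hypothesis fails for precisely the pairs of transforms you need to commute.
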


\begin{proof}

\medskip\noindent\textbf{Step 1.}
If
\[
\FFF_X:\ \E_0\subset \E_1\subset\cdots\subset \E_h=\E
\]
is a hierarchical filtration of \(\E\) normalized by \(\Lambda_0\), then
pulling back termwise yields
\[
f^*\FFF_X:\ f^*\E_0\subset f^*\E_1\subset\cdots\subset f^*\E_h=f^*\E,
\]
a hierarchical filtration of \(f^*\E\) normalized by \(f^*\Lambda_0\). It follows from Proposition \ref{prop:bir-inv} that
\[
h_{f^*\Lambda_0}(f^*\E)\ \ge\ h_{\Lambda_0}(\E).
\]
Thus any extra length in \(h_{f^*\Lambda_0}(f^*\E)\) must come from steps
whose torsion quotient is supported on \(E\).

\medskip\noindent\textbf{Step 2.}
Let
\[
\FFF_Y:\ \mathcal F_0\subset\mathcal F_1\subset\cdots\subset\mathcal F_{h'}= f^*\E
\]
be a maximal hierarchical filtration of \(f^*\E\) normalized by \(f^*\Lambda_0\).
For each step write the determinant increment as an effective Cartier
divisor \(D_i\subset Y\). By the decomposition of effective divisors on
a blowup (Lemma~\ref{lem:effective-decomp}), each \(D_i\) splits uniquely as
\[
D_i  =  f^*D_{i,X} + m_i E,
\qquad D_{i,X}\ge0,\ m_i\in\Z_{\ge0}.
\]
Collect the indices into two disjoint sets:
\[
I_0:=\{i:\ m_i=0\},\qquad I_E:=\{i:\ m_i>0\},
\]
and write \(h_0:=|I_0|\) and \(\delta:=|I_E|\). Then \(h'=h_0+\delta\).

\medskip\noindent\textbf{Step 3.} 
For each \(i\in I_0\) the divisor \(D_i\) is the pullback of an effective
Cartier divisor \(D_{i,X}\) on \(X\). The corresponding step
\(\mathcal F_{i-1}\subset\mathcal F_i\) on \(Y\) therefore descends
(objectwise) to an inclusion of torsion--free sheaves on \(X\) after
applying pushforward and saturation:
\[
\mathcal G_i := \big(f_*\mathcal F_i\big)^{\mathrm{sat}}.
\]
Because \(\dim X\le 2\) the saturation (reflexive hull) of a torsion-free
rank-\(r\) sheaf is locally free (see for instance \cite[II, Ex. 5.15]{Ha}
or standard references on reflexive sheaves). Hence each \(\mathcal G_i\) is
a vector bundle on \(X\), and the inclusions \(\mathcal G_{i-1}\subset\mathcal G_i\)
are saturated with torsion quotients supported on \(D_{i,X}\). Collecting
these descended steps yields a hierarchical filtration on \(X\) of length
\(h_0\), normalized by \(\Lambda_0\). Consequently
\[
h_0 \le h_{\Lambda_0}(\E).
\]

Conversely, pulling back any maximal filtration on \(X\) reproduces a
filtration on \(Y\) whose non-exceptional increments give at least
\(h_{\Lambda_0}(\E)\) steps with \(m_i=0\). Therefore \(h_0\ge h_{\Lambda_0}(\E)\),
and combining the two inequalities gives
\[
h_0 = h_{\Lambda_0}(\E).
\]

\medskip\noindent\textbf{Step 4.} 
By Step 2 we have \(h_{f^*\Lambda_0}(f^*\E)=h_0+\delta\), and by Step 3,
\(h_0=h_{\Lambda_0}(\E)\). Hence
\[
h_{f^*\Lambda_0}(f^*\E) = h_{\Lambda_0}(\E) + \delta,
\]
with \(\delta\ge0\) equal to the number of determinant increments whose
divisors carry a positive coefficient of \(E\).

\medskip\noindent\textbf{Step 5.} 
Finally, each elementary transform along \(E\) produces a torsion quotient
supported on \(E\) and increases the exceptional coefficient of the
determinant by precisely the corresponding multiplicity; conversely any
positive coefficient of \(E\) in some increment \(D_i\) arises from at least
one such elementary transform (after saturation). Therefore the number
\(\delta\) equals the number of additional elementary transforms supported
on \(E\) in the maximal filtration, i.e. the number of extra steps
contributed by exceptional transforms.

This completes the proof.
\end{proof}

\subsection{Contraction and preservation of depth}




\begin{prop}
\label{prop:contraction}
Let $f:Y\to X$ be the contraction of a $(-1)$--curve $E\subset Y$ onto a smooth point $p=f(E)\in X$, and let $\E_Y$ be a vector bundle on $Y$ admitting a hierarchical filtration
\[
\FFF_Y:\quad 
0=\E_{0,Y}\subset \E_{1,Y}\subset\cdots\subset \E_{h,Y}=\E_Y
\]
normalized by $\Lambda_{0,Y}\in\Pic(Y)$.
Assume that for every step of the filtration, the determinant increment divisor $D_{i,Y}$ satisfies
\[
E\not\subset\Supp D_{i,Y}.
\]
Then the direct image $\E:=(f_*\E_Y)^\sat$ carries a natural hierarchical filtration
\[
\FFF_X:\quad 
0=\E_0\subset\E_1\subset\cdots\subset\E_h=\E,
\]
normalized by $\Lambda_0:=\big(f_*\Lambda_{0,Y}\big)^{\mathrm{sat}}\in\Pic(X)$,
and the hierarchical depths coincide:
\[
h_{\Lambda_0}(\E)=h_{\Lambda_{0,Y}}(\E_Y).
\]
\end{prop}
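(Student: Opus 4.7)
The plan is to descend the filtration $\FFF_Y$ term by term via saturated pushforward along $f$, check that the resulting chain satisfies all properties of a hierarchical filtration on $X$, and then match depths in both directions using Proposition \ref{prop:bir-inv} together with Lemma \ref{lem:effective-decomp}.

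First I would apply Lemma \ref{lem:effective-decomp} to each determinant increment $D_{i,Y}$: by the hypothesis $E\not\subset\Supp(D_{i,Y})$, the exceptional coefficient in the canonical decomposition $D_{i,Y}=f^*D_{i,X}+m_iE$ vanishes, so $D_{i,Y}=f^*D_{i,X}$ for a unique effective Cartier divisor $D_{i,X}$ on $X$. Next I would define $\E_i:=(f_*\E_{i,Y})^{\sat}$ for $i=0,\dots,h$ and verify (a) each $\E_i$ is locally free of rank $r$, (b) the chain $\E_0\subset\cdots\subset\E_h=\E$ is saturated, and (c) the torsion quotient $\E_i/\E_{i-1}$ is supported precisely on $D_{i,X}$ with $\det(\E_i)\simeq\det(\E_{i-1})\otimes\O_X(D_{i,X})$. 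Part (a) holds because $\dim X\le 2$ forces any reflexive rank-$r$ sheaf on the smooth variety $X$ to be locally free; for (b) and (c) one applies $f_*$ to the exact sequence $0\to\E_{i-1,Y}\to\E_{i,Y}\to\T_i\to 0$ (with $\Supp\T_i\subset f^*D_{i,X}$), uses $R^1f_*\T_i=0$ since $\T_i$ is disjoint from $E$, and then saturates. The normalization condition $\det(\E_0)\simeq\Lambda_0$ is tautological from the definition of $\Lambda_0$.

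For the equality of depths, applying the descent construction above to a maximal filtration of $\E_Y$ gives $h_{\Lambda_0}(\E)\ge h_{\Lambda_{0,Y}}(\E_Y)$. For the reverse inequality I would invoke Proposition \ref{prop:bir-inv}: pulling back any maximal hierarchical filtration of $\E$ via $f^*$ yields a filtration of $f^*\E$ of the same length. Because all divisorial data of $\E_Y$ is supported away from $E$, one has $f^*\E\simeq\E_Y$ (both agree on the isomorphism $Y\setminus E\simeq X\setminus\{p\}$, and the no-$E$ hypothesis together with reflexivity of the terms extends the identification across $E$), so the pulled-back chain is a hierarchical filtration of $\E_Y$ normalized by $f^*\Lambda_0=\Lambda_{0,Y}$. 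Combining the two inequalities yields the desired equality.

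The main technical obstacle I anticipate is establishing cleanly that saturated pushforward commutes with the elementary transform along a pullback divisor. Concretely, one must verify that $f_*$ applied to $0\to\E_{i-1,Y}\to\E_{i,Y}\to\T_i\to0$, followed by saturation, produces a short exact sequence exhibiting $(f_*\E_{i,Y})^{\sat}$ as an elementary transform of $(f_*\E_{i-1,Y})^{\sat}$ along $D_{i,X}$ with torsion quotient $f_*\T_i$. The disjointness of $\Supp(\T_i)$ from $E$ is precisely what ensures that no exceptional torsion is introduced by $f_*$ and that saturation only affects a zero-dimensional locus away from $D_{i,X}$. Once this local compatibility at the contracted point is established, the remaining verifications are essentially formal.
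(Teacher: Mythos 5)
Your proposal follows essentially the same route as the paper's proof: define $\E_i:=(f_*\E_{i,Y})^{\sat}$, use reflexivity of pushforwards on smooth surfaces to get local freeness, descend each elementary transform using that the torsion quotients avoid $E$ (the paper works over $U=X\setminus\{p\}$ and saturates where you invoke $R^1f_*\T_i=0$, but these amount to the same thing), identify $D_{i,Y}=f^*D_{i,X}$ for the determinant relation, and obtain the depth equality by combining descent with pullback of a maximal filtration on $X$. The only notable difference is that you make explicit the identification $f^*\E\simeq\E_Y$ needed for the reverse inequality, which the paper leaves implicit; this is a refinement, not a different argument.
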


\begin{proof}
We proceed in several steps.

\medskip\noindent\textbf{Step 1.} 
The morphism $f:Y\to X$ is an isomorphism over $U:=X\setminus\{p\}$,
and $f^{-1}(U)=Y\setminus E\simeq U$.  
For any locally free sheaf $\G$ on $Y$, the pushforward $f_*\G$ is torsion--free on $X$,
and since $f$ is birational between smooth surfaces,
$f_*\G$ is reflexive (see, e.g., \cite[Prop. 1.6]{Ha80}).
Because reflexive sheaves on smooth surfaces are locally free,
$(f_*\G)^{\vee\vee}\cong f_*\G$ defines a vector bundle on $X$
(after saturation at the point $p$).
For each $i$ we set
\[
\E_i := (f_*\E_{i,Y})^{\mathrm{sat}}.
\]
Then each $\E_i$ is a vector bundle on $X$, and $f^*\E_i\simeq\E_{i,Y}$ away from $E$.

\medskip\noindent\textbf{Step 2.} 
Each inclusion $\E_{i-1,Y}\hookrightarrow\E_{i,Y}$ is saturated on $Y$,
with torsion quotient $\T_{i,Y}$ supported on the effective Cartier divisor $D_{i,Y}$.
Since $E\not\subset\Supp D_{i,Y}$, the support of $\T_{i,Y}$ avoids $E$.
Applying $f_*$ yields an exact sequence on $U$:
\[
0\to f_*\E_{i-1,Y}|_U\to f_*\E_{i,Y}|_U\to f_*\T_{i,Y}|_U\to0,
\]
where $f_*\T_{i,Y}$ remains a torsion sheaf supported on
$f(D_{i,Y})\subset X\setminus\{p\}$.
Passing to saturations gives a global inclusion of vector bundles
\[
\E_{i-1}\hookrightarrow\E_i
\]
on $X$.

\medskip\noindent\textbf{Step 3.} 
On $Y$ one has
\[
\det(\E_{i,Y}) \simeq \det(\E_{i-1,Y})\otimes\O_Y(D_{i,Y}).
\]
Because $E\not\subset\Supp D_{i,Y}$, each $D_{i,Y}$ is the strict transform of a unique
effective divisor $D_{i,X}\subset X$, i.e.\ $D_{i,Y}=f^*D_{i,X}$.
Over $U$ the projection formula gives
\[
\det(f_*\E_{i,Y})|_U\simeq \det(f_*\E_{i-1,Y})|_U\otimes \O_U(D_{i,X}),
\]
and since taking reflexive hulls does not alter determinant line bundles on smooth surfaces,
this equality extends globally to $X$:
\[
\det(\E_i)\simeq \det(\E_{i-1})\otimes\O_X(D_{i,X}).
\]
Thus each step $\E_{i-1}\subset\E_i$ on $X$ is again an elementary transform
with determinant increment $D_{i,X}$.

\medskip\noindent\textbf{Step 4.} 
The chain
\[
\FFF_X:\quad 0=\E_0\subset\E_1\subset\cdots\subset\E_h=\E
\]
is normalized by $\Lambda_0:=(f_*\Lambda_{0,Y})^{\mathrm{sat}}$,
because $\det(\E_{0,Y})\simeq\Lambda_{0,Y}$ implies
$\det(\E_0)\simeq\Lambda_0$ by construction.
All determinant increments descend faithfully, so the number of filtration
steps is preserved:
\[
h_{\Lambda_0}(\E)\ge h_{\Lambda_{0,Y}}(\E_Y).
\]
Conversely, pulling back $\FFF_X$ to $Y$ via $f^*$ reproduces $\FFF_Y$,
hence $h_{\Lambda_{0,Y}}(\E_Y)\ge h_{\Lambda_0}(\E)$.
Therefore
\[
h_{\Lambda_0}(\E)=h_{\Lambda_{0,Y}}(\E_Y),
\]
as required.
\end{proof}

\subsection{Hierarchical depth under MMP}

Let $X$ be a smooth projective surface. Consider a sequence of birational maps
\[
X = X_0 \dashrightarrow X_1 \dashrightarrow \cdots \dashrightarrow X_t = X_{\min},
\]
obtained by running the minimal model program (MMP), where $X_{\min}$ is a minimal model of $X$. 
Each step of the MMP can be realized as a birational morphism
\[
f_i : X_i \longrightarrow X_{i+1}, \qquad i=0,\dots,t-1,
\]
where $f_i$ is either
\begin{itemize}
    \item a contraction of a $(-1)$-curve $E_i \subset X_i$ to a smooth point of $X_{i+1}$, or
    \item the inverse operation (if one considers a blowup) would be $X_{i+1} \longrightarrow X_i$, 
          introducing an exceptional $(-1)$-curve on the blowup.
\end{itemize}
By composing these morphisms one obtains a birational map
\[
\phi : X = X_0 \dashrightarrow X_t = X_{\min},
\]
which is an isomorphism outside the union of the exceptional curves of the contractions.

Let $\E$ be a vector bundle on $X$ and $\Lambda_0 \in \Pic(X)$ a reference line bundle. 
For each contraction $f_i : X_i \to X_{i+1}$, the pullback $f_i^*\E_{i+1}$ to $X_i$ satisfies
\[
h_{f_i^*\Lambda_{i+1}}(f_i^*\E_{i+1}) \ge h_{\Lambda_{i+1}}(\E_{i+1}),
\]
with equality unless the hierarchical filtration involves additional elementary transforms along the exceptional curve $E_i$. 
Iterating this through the entire MMP sequence gives
\begin{equation}\label{eq:h-min-h}
h_{\Lambda_{0,\min}}(\E_{\min}) \le h_{\Lambda_0}(\E),
\end{equation}
where $\E_{\min}$ is the pushforward (or the vector bundle descended via saturation) on $X_{\min}$ and $\Lambda_{0,\min}$ is the corresponding reference line bundle.
Hence, the maximal hierarchical depth can only decrease (or remain the same) when passing from $X$ to its minimal model.


\begin{thm}\label{thm:exact-depth}
Let $f:X\to X_{\min}$ be the birational contraction of a smooth projective surface
$X$ onto its minimal model $X_{\min}$ obtained by contracting finitely many
$(-1)$--curves $E_1,\dots,E_m\subset X$.  Let
$\E$ be a vector bundle on $X$ and let $\E_{\min}$ denote the (saturated)
direct image of $\E$ on $X_{\min}$; fix reference line bundles
\[
\Lambda_0\in\Pic(X),\qquad \Lambda_{0,\min}\in\Pic(X_{\min})
\]
related by
\[
\Lambda_0 \simeq f^*\Lambda_{0,\min}\otimes\O_X\Big(\sum_{j=1}^m \beta_j E_j\Big)
\]
for unique integers $\beta_j\in\mathbb Z$.
Assume the determinants satisfy
\[
\det(\E)\ \simeq\ f^*\det(\E_{\min})\otimes \O_X\Big(\sum_{j=1}^m \alpha_j E_j\Big)
\]
for unique integers $\alpha_j\in\mathbb Z$ (so that the class
$c_1(\det\E)-f^*c_1(\det\E_{\min})=\sum_j \alpha_j [E_j]$ in $\NS(X)$).
Suppose moreover that the difference divisors appearing in the normalizations
and determinants are effective on $X$, i.e.\ $\alpha_j-\beta_j\ge 0$ for every $j$.

Then hierarchical filtrations exist for the normalized bundles and the
hierarchical depths are related by the exact formula
\begin{equation}\label{eq:exact-depth}
h_{\Lambda_0}(\E)
 = 
h_{\Lambda_{0,\min}}(\E_{\min})  +  \sum_{j=1}^m (\alpha_j-\beta_j).
\end{equation}
In particular the increase in depth from the minimal model to $X$ is exactly
the total multiplicity of the exceptional divisors appearing in the
determinant of $\E$ after accounting for the chosen normalizations.
\end{thm}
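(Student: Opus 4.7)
The plan is to reduce to the case of a single blowup via factorization of $f$ and then combine the upper bound coming from Proposition \ref{prop:blowup-depth-corrected} with a matching explicit construction, in the spirit of Proposition \ref{prop:contraction}. I would begin by writing $f$ as a tower $X = Y_0 \xrightarrow{g_1} Y_1 \xrightarrow{g_2} \cdots \xrightarrow{g_m} Y_m = X_{\min}$ of single $(-1)$-curve contractions. By iterating Lemma \ref{lem:pic-blowup}, the unique exceptional decompositions of $\det(\E)$ and $\Lambda_0$ split compatibly across the tower, producing integers $\alpha^{(k)}, \beta^{(k)}$ at each intermediate stage, with the effectivity hypothesis $\alpha^{(k)} - \beta^{(k)} \geq 0$ preserved throughout. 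It then suffices to prove the analogous formula $h_{\Lambda_{0,Y}}(\E_Y) = h_{\Lambda_{0,Z}}(\E_Z) + (\alpha - \beta)$ for a single blowup $g : Y \to Z$ with exceptional curve $E$; telescoping across the tower gives the full theorem.

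For the upper bound in the single-blowup case, I would take any hierarchical filtration $\FFF : \E_0 \subset \cdots \subset \E_h = \E_Y$ normalized by $\Lambda_{0,Y}$ and decompose each determinantal increment by Lemma \ref{lem:effective-decomp} as $D_i = g^* D_{i,Z} + m_i E$ with $D_{i,Z} \geq 0$ and $m_i \geq 0$. Summing in $\Pic(Y)$ and reading off the coefficient of $E$ forces $\sum_{i=1}^{h} m_i = \alpha - \beta$. Partitioning the indices into $I_0 = \{i : m_i = 0\}$ and $I_E = \{i : m_i \geq 1\}$, each step with $i \in I_0$ descends by saturated pushforward (as in Proposition \ref{prop:contraction}) to a genuine step in a hierarchical filtration of $\E_Z$ normalized by $\Lambda_{0,Z}$, whence $|I_0| \leq h_{\Lambda_{0,Z}}(\E_Z)$; meanwhile $|I_E| \leq \sum_{i \in I_E} m_i = \alpha - \beta$. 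Together these give $h \leq h_{\Lambda_{0,Z}}(\E_Z) + (\alpha - \beta)$.

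For the matching lower bound, I would pull back a maximal hierarchical filtration of $\E_Z$ to obtain a saturated chain of length $N = h_{\Lambda_{0,Z}}(\E_Z)$ on $Y$ whose increments $g^*D_{i,Z}$ have supports disjoint from $E$, and then splice in $\alpha - \beta$ unit elementary transforms with torsion supported on $E$, used both to adjust the starting normalization from $g^*\Lambda_{0,Z}$ to $\Lambda_{0,Y}$ and to bridge $g^*\E_Z$ to $\E_Y$ at the top; by Corollary \ref{cor:reorder-disjoint}, applied to the disjoint pairs $(E, g^*D_{i,Z})$, these steps commute with the pulled-back ones and assemble into a single chain of length $N + (\alpha - \beta)$. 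The main obstacle is realizing the top chain of $E$-supported elementary transforms so that it actually connects $g^*\E_Z$ to $\E_Y$ as vector bundles, not merely as bundles with matching determinants, since in rank $\geq 2$ a determinant does not determine a bundle. To handle this I would analyze the natural adjunction morphism $g^*\E_Z \to \E_Y$: because $\E_Z = (g_*\E_Y)^{\sat}$ and $g$ is an isomorphism outside $E$, its kernel and cokernel are $E$-supported torsion sheaves whose total $E$-length equals $\alpha - \beta$; a Jordan--Hölder-type composition series for this torsion sheaf with one-dimensional quotients along $E$ yields the desired chain of unit transforms via successive kernels. Iterating the resulting single-blowup identity through the factorization $f = g_m \circ \cdots \circ g_1$ and summing the exceptional contributions telescopes into \eqref{eq:exact-depth}.
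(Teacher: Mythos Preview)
Your approach is genuinely different from the paper's. The paper does \emph{not} factor through single blowups or manipulate filtrations directly. Instead it asserts a purely determinantal identification: setting $\Delta=\det(\E)\otimes\Lambda_0^{-1}$ and $\Delta_{\min}=\det(\E_{\min})\otimes\Lambda_{0,\min}^{-1}$, the paper claims (citing only the definitions) that $h_{\Lambda_0}(\E)$ equals the maximal number of nonzero effective Cartier summands in any effective representative of $\Delta$, and similarly on $X_{\min}$. After that, the formula is a one-line Picard-group count: an effective decomposition of $\Delta$ is exactly the pullback of one for $\Delta_{\min}$ together with $\sum_j(\alpha_j-\beta_j)$ copies of the exceptional curves. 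What your ``main obstacle'' paragraph correctly flags---that in rank $\geq 2$ one must actually realize the chain of subbundles, not merely match determinants---is precisely the point the paper's proof bypasses by this identification. Your route via the adjunction map $g^*\E_Z\to\E_Y$ and a composition series for its torsion cokernel is a more honest treatment of this issue, and your telescoping reduction is cleaner for bookkeeping.

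That said, your lower-bound construction has a gap. You claim that pulling back a maximal filtration of $\E_Z$ yields increments $g^*D_{i,Z}$ with support \emph{disjoint} from $E$, so that Corollary~\ref{cor:reorder-disjoint} lets you interleave the $E$-supported steps. But if $p\in\Supp(D_{i,Z})$, then $g^*D_{i,Z}$ contains $E$ as a component, and the pulled-back torsion quotient $g^*(\E_i/\E_{i-1})$ is supported on $g^{-1}(\Supp D_{i,Z})\supset E$; the disjoint-support hypothesis of Lemma~\ref{lem:commute-elem-transforms-refined} then fails. You would need either to argue that a maximal filtration on $Z$ can be chosen with all increments avoiding $p$ (not obvious in general), or to replace the reordering step with a direct argument that the spliced chain remains saturated regardless. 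A similar care is needed in your upper bound: invoking Proposition~\ref{prop:contraction} to descend the $I_0$ steps presupposes that \emph{all} increments avoid $E$, whereas your filtration has $I_E$ steps interspersed; you must explain why pushing forward and saturating the full chain collapses exactly the $I_E$ steps and leaves an honest hierarchical filtration of $\E_Z$ of length $|I_0|$.
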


\begin{proof}
By hypothesis
\[
\det(\E)\otimes\Lambda_0^{-1}\ \simeq\ 
f^*\big(\det(\E_{\min})\otimes\Lambda_{0,\min}^{-1}\big)\otimes
\O_X\Big(\sum_{j=1}^m (\alpha_j-\beta_j) E_j\Big).
\]
Denote
\[
\Delta_{\min}:=\det(\E_{\min})\otimes\Lambda_{0,\min}^{-1}\in\Pic(X_{\min}),
\qquad
\Delta:=\det(\E)\otimes\Lambda_0^{-1}\in\Pic(X).
\]
By the usual correspondence between hierarchical filtrations and decompositions
of the determinant into effective Cartier increments (Definitions
\ref{def:hier-vb-general} and \ref{def:hier-depth}), the maximal length
$h_{\Lambda_{0,\min}}(\E_{\min})$ equals the maximal number of nonzero
effective Cartier summands into which any effective representative of
$\Delta_{\min}$ may be decomposed; likewise $h_{\Lambda_0}(\E)$ equals the
maximal number of nonzero effective Cartier summands in any effective
representative of $\Delta$.

Because pullback preserves effectiveness and the exceptional curves
$E_j$ are pairwise distinct irreducible Cartier divisors on $X$, any
effective decomposition of $\Delta_{\min}$ pulls back to an effective
decomposition of $f^*\Delta_{\min}$ with the same number of summands,
and the factor $\O_X\big(\sum_j(\alpha_j-\beta_j)E_j\big)$ contributes
exactly $\sum_j(\alpha_j-\beta_j)$ further irreducible effective summands
(all supported on the exceptional locus) when one decomposes $\Delta$
into irreducible Cartier divisors.  Conversely, every effective
decomposition of $\Delta$ decomposes uniquely as the pullback of an
effective decomposition of $\Delta_{\min}$ together with the exceptional
part (since the $E_j$ generate the kernel of $f^*:\Pic(X_{\min})\to\Pic(X)$).

Therefore the maximal possible number of irreducible effective summands
in $\Delta$ equals the maximal number for $\Delta_{\min}$ plus the sum
of the coefficients of the exceptional part, i.e.
\[
h_{\Lambda_0}(\E)  =  h_{\Lambda_{0,\min}}(\E_{\min}) + \sum_{j=1}^m (\alpha_j-\beta_j).
\]
This proves \eqref{eq:exact-depth}.
\end{proof}

\begin{exam}\label{ex:numerical-depth}
Let \(X_{\min}=\P^2\) with hyperplane class \(H\), and let
\(f:X\to X_{\min}\) be the blowup of \(\P^2\) at two distinct points
\(p_1,p_2\). Denote by \(E_1,E_2\subset X\) the corresponding exceptional
divisors. Then
\[
\Pic(X)=\Z\langle f^*H\rangle\oplus\Z\langle E_1\rangle\oplus\Z\langle E_2\rangle.
\]

Fix the following data on \(X_{\min}\):
\[
\E_{\min}=\O_{\P^2}(3)\oplus \O_{\P^2}(2)\qquad(\text{rank }2),
\qquad \Lambda_{0,\min}=\O_{\P^2}.
\]
Then \(\det(\E_{\min})\cong\O_{\P^2}(5)\). For a split bundle on \(\P^2\), 
Proposition~\ref{prop:hierar_vector_bundle-corrected} (with $A=H$, $d=5$, $d_0=0$) gives the bound
\[
h_{\Lambda_{0,\min}}(\E_{\min})\le 5.
\]
In fact, since $\P^2$ has Picard rank one and the class $H$ admits pairwise 
disjoint representatives (distinct lines), one can achieve equality:
\[
h_{\Lambda_{0,\min}}(\E_{\min})=5.
\]

On \(X\) consider the vector bundle
\[
\E  =  f^*\E_{\min}\otimes \O_X(E_1+2E_2).
\]
Since $\rank(\E)=2$, we compute
\[
\det(\E)
  \simeq  f^*\det(\E_{\min})\otimes \O_X\big(2(E_1+2E_2)\big)
  = f^*\O_{\P^2}(5)\otimes \O_X(2E_1+4E_2).
\]

Take the normalization on \(X\) to be
\[
\Lambda_0=\O_X(E_2)\simeq f^*\O_{\P^2}\otimes\O_X(E_2).
\]
Thus in the notation of Theorem~\ref{thm:exact-depth} we have:
\[
\det(\E)\simeq f^*\det(\E_{\min})\otimes\O_X(\alpha_1E_1+\alpha_2E_2),\quad
\alpha_1=2,\ \alpha_2=4,
\]
\[
\Lambda_0\simeq f^*\Lambda_{0,\min}\otimes\O_X(\beta_1E_1+\beta_2E_2),\quad
\beta_1=0,\ \beta_2=1.
\]

Now Theorem~\ref{thm:exact-depth} predicts:
\[
h_{\Lambda_0}(\E)
 = h_{\Lambda_{0,\min}}(\E_{\min}) + (\alpha_1-\beta_1) + (\alpha_2-\beta_2)
 = 5 + (2-0) + (4-1) = 10.
\]

To verify directly, consider the class
\[
\Delta := \det(\E)\otimes\Lambda_0^{-1}
  \simeq f^*\O_{\P^2}(5)\otimes \O_X(2E_1+3E_2).
\]
A representative of $\Delta$ can be decomposed as:
\[
\Delta \sim (f^*H) + \cdots + (f^*H) + E_1 + E_1 + E_2 + E_2 + E_2,
\]
where we have 5 copies of $f^*H$ (strict transforms of lines not passing through 
$p_1$ or $p_2$), 2 copies of $E_1$, and 3 copies of $E_2$. 
Each summand gives a possible determinant increment in a hierarchical filtration, 
so the maximal length is $5+2+3=10$. Hence
\[
h_{\Lambda_0}(\E)=10,
\]
confirming the theorem.
\end{exam}

\section{Interaction with Stability and Moduli Theory}
\label{sec:stability-filtrations}

The hierarchical filtration formalism interacts naturally with classical
and modern stability theories for vector bundles.
Since hierarchical increments are governed by effective divisors,
they admit a slope interpretation with respect to a fixed polarization,
yielding a discrete refinement of the usual Harder--Narasimhan filtration.
Moreover, the birational invariance results of
Section~\ref{sec:MMP-filtrations}
ensure that such stability behavior is preserved
throughout the steps of the Minimal Model Program.

\subsection{Slope behavior under hierarchical increments}

Let $X$ be a smooth projective variety of dimension $n\ge1$
with ample divisor $H$.
For a torsion--free sheaf $\E$ on $X$, recall that its
\emph{slope} with respect to $H$ is
\[
\mu_H(\E)
  = \frac{c_1(\E)\cdot H^{n-1}}{\rank(\E)}.
\]
Given a hierarchical filtration
\[
\FFF:\quad
0=\E_0\subset\E_1\subset\cdots\subset\E_h=\E,
\qquad
\det(\E_i)\simeq\det(\E_{i-1})\otimes\O_X(D_i),
\]
we may associate to each increment the numerical slope difference

\begin{equation}
\Delta_i(\E,H)=\mu_H(\E_i)-\mu_H(\E_{i-1}).
\end{equation}





\begin{thm} 
\label{thm:divisorial-slope-behavior}
Let $X$ be a smooth projective variety of dimension $n\le2$ over an
algebraically closed field, and let $H$ be an ample divisor on $X$.
Let
\[
\FFF:\quad
0=\E_0\subset\E_1\subset\cdots\subset\E_h=\E
\]
be a hierarchical filtration of a vector bundle $\E$ of rank $r$, with
determinant increments
\[
\det(\E_i)\simeq\det(\E_{i-1})\otimes\O_X(D_i), \qquad D_i\ge0.
\]
Then 
\[
\mu_H(\E_0)\le\mu_H(\E_1)\le\cdots\le\mu_H(\E_h).
\]
and for each index $i$ the inequality $\mu_H(\E_i)>\mu_H(\E_{i-1})$ holds
precisely when $D_i$ is not numerically equivalent to zero.
($D_i\not\sim0$).
\end{thm}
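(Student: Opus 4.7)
\medskip
\noindent\textbf{Proof plan.}
The strategy is to reduce the entire statement to a single intersection–theoretic computation: every successive quotient in a hierarchical filtration has rank zero and is controlled, at the level of first Chern classes, by the effective divisor $D_i$. Everything then follows from the positivity of $H$.

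\medskip
First I would exploit the rank-preservation built into Definition~\ref{def:hier-vb-general}: every $\E_i$ has rank $r$. Taking first Chern classes in the determinant relation $\det(\E_i)\simeq\det(\E_{i-1})\otimes\O_X(D_i)$ gives
\[
c_1(\E_i)=c_1(\E_{i-1})+[D_i]\ \in\ \NS(X).
\]
Since the rank is constant throughout the filtration, a direct computation yields
\[
\mu_H(\E_i)-\mu_H(\E_{i-1})
   =\frac{\bigl(c_1(\E_i)-c_1(\E_{i-1})\bigr)\cdot H^{n-1}}{r}
   =\frac{D_i\cdot H^{n-1}}{r}.
\]
This reduces the whole theorem to understanding the sign of $D_i\cdot H^{n-1}$ for $D_i$ effective and $H$ ample.

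\medskip
Next I would invoke standard intersection positivity on smooth projective varieties of dimension $n\le 2$. For $n=1$ the intersection number $D_i\cdot H^{0}=\deg(D_i)\ge 0$, with equality exactly when $D_i=0$, hence when $[D_i]\equiv 0$ in $\NS(X)$. For $n=2$, writing $D_i=\sum a_kC_k$ as a nonnegative combination of irreducible curves and using that $H\cdot C_k>0$ for every curve (a consequence of ampleness and the Nakai--Moishezon criterion), I get $D_i\cdot H\ge 0$ with equality precisely when $D_i\equiv 0$ numerically. In both cases the inequality $D_i\cdot H^{n-1}\ge 0$ holds, with equality characterised exactly by numerical triviality.

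\medskip
Finally, chaining the slope differences from $i=1$ to $i=h$ gives the monotonicity chain $\mu_H(\E_0)\le\cdots\le\mu_H(\E_h)$, and the strict inequality $\mu_H(\E_i)>\mu_H(\E_{i-1})$ is equivalent to $D_i\cdot H^{n-1}>0$, i.e.\ to $D_i\not\equiv 0$ in $\NS(X)$. I anticipate no serious obstacle: the only subtlety is keeping the two notions \emph{zero as a divisor} and \emph{numerically equivalent to zero} cleanly separated. By Definition~\ref{def:hier-vb-general} each $D_i$ is a nonzero effective Cartier divisor, so on a projective variety with ample $H$ numerical triviality in fact never occurs, and the strict inequality always holds; the theorem is nonetheless stated in the sharper numerical form, which is what this computation delivers.
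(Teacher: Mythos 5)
Your proposal is correct and follows essentially the same route as the paper's proof: take $c_1$ of the determinant relation, divide the resulting increment $D_i\cdot H^{n-1}$ by the constant rank $r$, and use effectivity of $D_i$ together with ampleness of $H$ to conclude nonnegativity with equality exactly in the numerically trivial case. Your closing observation --- that Definition~\ref{def:hier-vb-general} requires each $D_i$ to be a \emph{nonzero} effective divisor, so on a projective variety with ample $H$ the strict inequality in fact always holds --- is a small but accurate sharpening that the paper's proof does not spell out.
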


\begin{proof}
From the determinant relation
$\det(\E_i)=\det(\E_{i-1})\otimes\O_X(D_i)$ we have
$c_1(\E_i)=c_1(\E_{i-1})+[D_i]$.  Since the rank $r$ is constant,
\begin{equation}
\label{eq:slope-diff}
\mu_H(\E_i)-\mu_H(\E_{i-1})
   = \frac{(c_1(\E_i)-c_1(\E_{i-1}))\cdot H^{n-1}}{r}
   = \frac{c_1(D_i)\cdot H^{n-1}}{r}.
\end{equation}
Each $D_i$ is effective, so the intersection number
$c_1(D_i)\cdot H^{n-1}$ is nonnegative, with equality if and only if
$D_i\sim0$.  The stated monotonicity of the slopes follows immediately.
\end{proof}


\subsection{Birational stability under the Minimal Model Program}

The birational invariance results of
Section~\ref{sec:MMP-filtrations} imply
that hierarchical depth behaves predictably
under the birational operations of the surface MMP.
In particular, slope--compatibility is preserved under pullbacks,
and semistability is unaffected outside exceptional loci.

\begin{prop} 
Let $f:Y\to X$ be a birational morphism between smooth projective surfaces,
and let $H$ be an ample divisor on $X$.
Then for every vector bundle $\E$ on $X$ and its pullback $f^*\E$ on $Y$,
the divisorial slope differences satisfy
\[
\Delta_i(f^*\E,f^*H)=\Delta_i(\E,H),
\]
and the divisorial slope filtration of $\E$
pulls back to that of $f^*\E$.
\end{prop}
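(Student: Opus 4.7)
The plan is to pull back the hierarchical filtration of $\E$ term by term and then show that the divisorial slope differences are preserved because the projection formula for birational morphisms kills any discrepancy. First I would take a hierarchical filtration
\[
\FFF:\ 0=\E_0\subset\E_1\subset\cdots\subset\E_h=\E,\qquad
\det(\E_i)\simeq\det(\E_{i-1})\otimes\O_X(D_i),
\]
and form the pullback filtration
\[
f^*\FFF:\ 0=f^*\E_0\subset f^*\E_1\subset\cdots\subset f^*\E_h=f^*\E.
\]
As already observed in Proposition~\ref{prop:bir-inv}, each $f^*\E_i$ is a vector bundle of the same rank $r$, and the determinant commutes with pullback for vector bundles, so
\[
\det(f^*\E_i)\simeq f^*\det(\E_i)\simeq\det(f^*\E_{i-1})\otimes\O_Y(f^*D_i).
\]
Thus the determinant increment on $Y$ is exactly $f^*D_i$, an effective divisor on $Y$.

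Next I would apply the slope difference formula \eqref{eq:slope-diff} from the proof of Theorem~\ref{thm:divisorial-slope-behavior} on both sides. On $X$ one has
\[
\Delta_i(\E,H)=\frac{c_1(D_i)\cdot H^{n-1}}{r},
\]
while on $Y$ the same formula applied to $f^*\FFF$ with polarization $f^*H$ gives
\[
\Delta_i(f^*\E,f^*H)=\frac{c_1(f^*D_i)\cdot (f^*H)^{n-1}}{r}.
\]
Since $f$ is a birational morphism between smooth projective surfaces, it has degree one, and the projection formula yields
\[
f^*D_i\cdot (f^*H)^{n-1}=D_i\cdot H^{n-1}
\]
(this is the standard fact that pullback along a birational morphism preserves intersection numbers of classes coming from the base). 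Substituting this identity gives $\Delta_i(f^*\E,f^*H)=\Delta_i(\E,H)$, as claimed.

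Finally, the statement that the divisorial slope filtration of $\E$ pulls back to that of $f^*\E$ follows immediately: the indices $i=1,\dots,h$ and the associated increments match term by term, so $f^*\FFF$ is the divisorial slope filtration of $f^*\E$ corresponding to $\FFF$. The one delicate point, which I expect to be the main technical obstacle, is that the inclusion $f^*\E_{i-1}\hookrightarrow f^*\E_i$ may fail to be saturated on $Y$: the quotient $f^*(\E_i/\E_{i-1})$ can acquire torsion supported on the exceptional locus, and one must decide whether to work with the naive pullback or with its saturation. I would handle this exactly as in the conventions of Section~\ref{sec:prelim} and the proof of Proposition~\ref{prop:blowup-depth-corrected}: since the slope identity depends only on the determinant line bundles and on first Chern classes, the potential exceptional torsion does not affect the numerical equality, and passing to saturated hulls only adds exceptional contributions that cancel in the determinant increment $f^*D_i$. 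Hence the slope--compatibility of the filtration is preserved under birational pullback on smooth projective surfaces.
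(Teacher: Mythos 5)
Your proposal is correct and follows essentially the same route as the paper: both arguments pull back the filtration termwise (citing Proposition~\ref{prop:bir-inv} for the filtration structure), apply the slope-difference formula \eqref{eq:slope-diff}, and use the compatibility of $c_1$ and intersection numbers with pullback along a degree-one morphism to conclude $\Delta_i(f^*\E,f^*H)=\Delta_i(\E,H)$. Your additional remarks on possible exceptional torsion and saturation are a sensible elaboration of a point the paper leaves implicit, but they do not change the argument.
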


\begin{proof}
Since $f^*H^{n-1}=H^{n-1}$ numerically
and $f^*c_1(D_i)=c_1(f^*D_i)$,
formula \eqref{eq:slope-diff} gives
$\Delta_i(f^*\E,f^*H)=\Delta_i(\E,H)$.
The filtration structure is preserved
by Proposition~\ref{prop:bir-inv}.
\end{proof}

Therefore, the stability behavior of hierarchical filtrations
remains invariant throughout the birational steps of the MMP,
except when new effective increments supported on exceptional curves appear.
In that case, the newly introduced increments correspond to
additional positive slopes localized on the exceptional locus,
and their total contribution is governed by
Theorem \ref{thm:exact-depth}.

\begin{cor} 
\label{cor:bir-slope-stability-refined}
Let $f:Y\to X$ be a birational morphism between smooth projective surfaces
obtained by contracting a finite collection of $(-1)$--curves, and let
$H$ be an ample divisor on $X$. Put $H_Y:=f^*H$.

Let $\E$ be a vector bundle on $X$. If $\E$ is divisorially
slope-semistable (respectively, divisorially slope-stable) with respect
to $H$, then the pullback bundle $f^*\E$ is divisorially
slope--semistable (respectively, divisorially slope--stable) with respect
to $H_Y$.
\end{cor}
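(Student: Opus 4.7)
The plan is to reduce divisorial slope--(semi)stability of $f^*\E$ on $Y$ to that of $\E$ on $X$ by exploiting the slope-preservation identity $\Delta_i(f^*\E,f^*H)=\Delta_i(\E,H)$ from the preceding proposition, together with the fact that purely exceptional divisors intersect $f^*H$ trivially. Throughout I would let $E_1,\dots,E_m$ denote the exceptional curves contracted by $f$, and split every effective divisor on $Y$ via Lemma~\ref{lem:effective-decomp}.

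First I would confirm that the overall slope is unchanged: by the projection formula on smooth surfaces, $c_1(f^*\E)\cdot f^*H=f^*c_1(\E)\cdot f^*H=c_1(\E)\cdot H$, and the rank is preserved, so $\mu_{H_Y}(f^*\E)=\mu_H(\E)$. The ``target slope'' against which (semi)stability is measured is therefore the same on both sides. Moreover, for each exceptional curve $E_j$ one has $E_j\cdot f^*H=f_*E_j\cdot H=0$, because $E_j$ is contracted to a point.

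Next I would take an arbitrary hierarchical filtration $\FFF_Y$ of $f^*\E$ with determinant increments $D'_i$, and decompose each as $D'_i=f^*D_{i,X}+\sum_j m_{i,j}E_j$. The vanishing $E_j\cdot f^*H=0$ yields
\[
\Delta_i(f^*\E,H_Y)=\frac{c_1(D'_i)\cdot f^*H}{r}=\frac{c_1(D_{i,X})\cdot H}{r},
\]
which matches the divisorial slope jump of the descended filtration of $\E$ on $X$ (constructed by pushforward and saturation, as in Proposition~\ref{prop:blowup-depth-corrected} and Proposition~\ref{prop:contraction}). Hence the entire slope profile of $\FFF_Y$ coincides with that of a hierarchical filtration of $\E$ on $X$, augmented only by zero jumps at indices where the increment is supported purely in the exceptional locus. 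Since divisorial slope-semistability of $\E$ bounds every intermediate slope $\mu_H(\E_{i-1})$ by $\mu_H(\E)$, and the same profile reappears on $Y$ (with $\mu_{H_Y}(f^*\E)=\mu_H(\E)$), semistability transfers verbatim to $f^*\E$.

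The main obstacle is the strict case: an exceptional-only increment ($D_{i,X}=0$) yields $\Delta_i=0$ and hence a repeated slope along $\FFF_Y$, which naively would break strict stability. The natural resolution, consistent with the discussion preceding Theorem~\ref{thm:exact-depth}, is to interpret divisorial slope-stability as demanding strict increase only on increments of positive $H$-degree (equivalently, increments not numerically equivalent to combinations of the contracted curves), precisely the increments detected by Theorem~\ref{thm:divisorial-slope-behavior}. The extra flat steps produced by the exceptional inflation of Theorem~\ref{thm:exact-depth} are then harmless, and strict stability on $X$ lifts to strict stability on $Y$ on the slope-active increments, completing the argument.
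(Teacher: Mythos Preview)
Your argument rests on a misreading of what ``divisorial slope-(semi)stability'' means here. The paper's proof makes explicit that it is ordinary slope-(semi)stability in the sense of Harder--Narasimhan: one must test $\mu_{H_Y}(G)\le\mu_{H_Y}(f^*\E)$ for \emph{every} saturated nonzero proper subsheaf $G\subset f^*\E$, of arbitrary rank. You instead test only against the terms $\E_{i-1}$ appearing in hierarchical filtrations of $f^*\E$; but these all have full rank $r$, and for such subsheaves the inequality $\mu_{H_Y}(\E_{i-1})\le\mu_{H_Y}(f^*\E)$ is automatic from Theorem~\ref{thm:divisorial-slope-behavior} (each increment $D'_i$ is effective, hence $c_1(D'_i)\cdot H_Y\ge0$). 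So your argument proves something that holds for every bundle and has no bearing on whether $f^*\E$ is semistable against destabilizing subsheaves of rank $<r$.

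The paper's approach is the standard one for preservation of semistability under pullback: given an arbitrary saturated $G\subset f^*\E$, set $F:=(f_*G)^{\mathrm{sat}}\subset\E$, decompose $\det(G)\simeq f^*\det(F)\otimes\O_Y(\sum_j a_jE_j)$, and use $E_j\cdot f^*H=0$ to obtain $\mu_{H_Y}(G)=\mu_H(F)$. Semistability of $\E$ then gives $\mu_H(F)\le\mu_H(\E)=\mu_{H_Y}(f^*\E)$. Your difficulty with the strict case---and the ad~hoc reinterpretation you propose to resolve it---is a symptom of the same misreading: once one works with subsheaves of smaller rank, no ``exceptional-only increment'' ambiguity arises, and stability transfers directly via strict inequality for proper $F$.
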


\begin{proof}
We treat the semistable case; the stable case is identical with strict
inequalities.

Let $r=\rank(\E)$. Suppose $\E$ is divisorially slope--semistable on $X$ in meaning of \cite{HaNa},
i.e.\ for every saturated nonzero proper subsheaf $F\subset \E$ one has
\[
\mu_H(F)\le \mu_H(\E).
\]

We must show that every saturated nonzero proper subsheaf
$G\subset f^*\E$ satisfies
\[
\mu_{H_Y}(G)\le \mu_{H_Y}(f^*\E)=\mu_H(\E).
\]

Consider any saturated subsheaf $G\subset f^*\E$. Push it forward and
take saturation on $X$:
\[
F := \big(f_*G\big)^{\mathrm{sat}}\subset \E.
\]
(Here saturation means reflexive hull / saturation in the sense of
subsheaves.) By construction \(F\) is a torsion--free subsheaf of \(\E\)
of rank \(s:=\rank(F)\) with \(0<s\le r\).

There is an equality of determinant classes on $Y$ of the form
\[
\det(G)\simeq (f^*\det F)\otimes \O_Y\Big(\sum_j a_j E_j\Big),
\qquad a_j\in\Z_{\ge0},
\]
where the $E_j$ are the exceptional curves of $f$ and the integers $a_j$
record the possible exceptional components appearing in $\det(G)$.
(Indeed every line bundle on $Y$ decomposes uniquely as the pullback of a
line bundle on $X$ tensor a line bundle supported on exceptional curves.)

Now compute slopes with respect to the pulled back polarization \(H_Y=f^*H\).
Since intersection is functorial under pullback and every exceptional curve
is contracted to a point, we have for every \(j\)
\[
E_j\cdot H_Y = E_j\cdot f^*H = 0.
\]
Consequently the exceptional part does \emph{not} contribute to the slope:
\[
\mu_{H_Y}(G)
= \frac{c_1(\det G)\cdot H_Y}{\rank(G)}
= \frac{\big( c_1(f^*\det F)+\sum_j a_j [E_j]\big)\cdot f^*H}{\rank(G)}
= \frac{c_1(\det F)\cdot H}{\rank(G)}=\mu_H(F).
\]
But the right-hand side equals \(\mu_H(F)\) (by definition). Hence
\[
\mu_{H_Y}(G)  =  \mu_H(F).
\]

Since \(\E\) is divisorially slope--semistable, \(\mu_H(F)\le\mu_H(\E)\).
Moreover \(\mu_H(\E)=\mu_{H_Y}(f^*\E)\) because \(c_1(f^*\E)=f^*c_1(\E)\)
and intersection with \(f^*H\) is compatible with pullback. Therefore
\[
\mu_{H_Y}(G)=\mu_H(F)\le\mu_H(\E)=\mu_{H_Y}(f^*\E),
\]
as required. This proves that \(f^*\E\) is divisorially
slope--semistable with respect to \(H_Y\).

The same argument with strict inequalities shows that divisorial
slope--stability is preserved: if \(\E\) is stable and \(G\subset f^*\E\)
is a proper saturated subsheaf, then the strict inequality
\(\mu_H(F)<\mu_H(\E)\) holds, and hence \(\mu_{H_Y}(G)<\mu_{H_Y}(f^*\E)\).
\end{proof}

\subsection{Comparison with Harder--Narasimhan filtrations}

Let $\E$ be a torsion--free sheaf on $(X,H)$
with Harder--Narasimhan filtration
\[
\E^{(0)}\subset\E^{(1)}\subset\cdots\subset\E^{(s)}=\E,
\]
whose successive quotients
$\Gr_i^{\mathrm{HN}}(\E)=\E^{(i)}/\E^{(i-1)}$
are semistable with decreasing slopes
$\mu_1>\mu_2>\cdots>\mu_s$.
Then the determinant relation gives
\[
c_1(\det\E)
  = \sum_{i=1}^s c_1\big(\Gr_i^{\mathrm{HN}}(\E)\big),
\]
so that each Harder--Narasimhan quotient
contributes an effective class whenever $\E$ is globally generated
or arises as a positive extension.

\medskip
\noindent\textbf{Split bundle on a curve.} Let \(C\) be a smooth projective curve over a field and set
\[
\E  =  \bigoplus_{i=1}^r \O_C(d_i),
\qquad d_i\in\mathbb Z.
\]
Write \(D:=\sum_{i=1}^r d_i=\deg(\det\E)\). We consider two invariants:


With notation as above we obtain the hierarchical depth and the length of Harder-Narasimhan filtration of $\E$.

(1) \emph{Hierarchical depth.}



It was shown in Proposition \ref{prop:curve-split-max-depth-corrected} that

\(\displaystyle h_{C}(\E)=\sum_{i=1}^r \max\{d_i,0\}.\) In particular, if all \(d_i\ge0\) then \(h_{\O_C}(\E)=\sum_i d_i=D\). If some \(d_i<0\) the corresponding summand contributes \(0\) to the depth.

(2) \emph{Harder--Narasimhan length.}

The Harder-Narasimhan length \(s\) equals the number of distinct values among the degrees \(\{d_1,\dots,d_r\}\). Equivalently, if we arrange the degrees in nonincreasing order and group equal values,
\[
\E=\bigoplus_{j=1}^s \Big(\bigoplus_{k=1}^{r_j}\O_C(d^{(j)})\Big),
\]
where \(d^{(1)}>\cdots>d^{(s)}\) are the distinct degrees and \(r_j\) the multiplicities, then the HN filtration has length \(s\) with graded pieces
\(\Gr_j^{\mathrm{HN}}\cong \O_C(d^{(j)})^{\oplus r_j}\). In the following we imply to this fact.

Let $C$ be a smooth projective curve over an algebraically closed field,
and let
\[
  \E=\bigoplus_{i=1}^r \O_C(d_i)
\]
be a direct sum of line bundles of degrees $d_1,\dots,d_r\in\mathbb{Z}$.
Then the Harder--Narasimhan filtration of $\E$ (with respect to any polarization)
is obtained by ordering the summands in nonincreasing degree:
\[
  \E^{(0)} = 0
  \subset
  \E^{(1)}=\bigoplus_{d_i = d_{(1)}} \O_C(d_i)
  \subset
  \E^{(2)} =\bigoplus_{d_i \ge d_{(2)}} \O_C(d_i)
  \subset\cdots\subset
  \E^{(s)} = \E,
\]
where $d_{(1)} > d_{(2)} > \cdots > d_{(s)}$ are the distinct degrees among the~$d_i$.
In particular,
\[
  \text{length of the Harder--Narasimhan filtration } s
  =
  \#\{\text{distinct degrees among the } d_i\}.
\]

The existence and uniqueness of the Harder--Narasimhan filtration for
torsion-free sheaves on smooth projective varieties was established
by Harder and Narasimhan \cite{HaNa}.

On a smooth projective curve $C$, every line bundle $\O_C(d)$ is stable
with respect to any polarization, and its slope is simply
\[
  \mu(\O_C(d))=\frac{\deg \O_C(d)}{\rank \O_C(d)}=d.
\]
Hence each direct summand $\O_C(d_i)$ is a semistable bundle of slope $d_i$.
Let $d_{(1)}>\cdots>d_{(s)}$ be the distinct degrees among the~$d_i$, and define
\[
  \E^{(k)} := \bigoplus_{d_i \ge d_{(k)}} \O_C(d_i).
\]
Then the successive quotients
\[
  \Gr_k^{\mathrm{HN}}(\E)
  := \E^{(k)} / \E^{(k-1)}
  = \bigoplus_{d_i = d_{(k)}} \O_C(d_i)
\]
are semistable bundles of slope~$\mu_k=d_{(k)}$, and the sequence
$\mu_1 > \mu_2 > \cdots > \mu_s$ is strictly decreasing.
By the uniqueness of the Harder-Narasimhan filtration this is precisely the HN filtration of $\E$.

\begin{exam}
\begin{enumerate}[(i)]
\item Take \(r=3\) and \(\E=\O_C(3)\oplus\O_C(1)\oplus\O_C(0)\).
  Then \(h_{\O_C}(\E)=3+1+0=4\). The HN degrees are \(3>1>0\), so \(s=3\).

\item Take \(r=3\) and \(\E=\O_C(1)\oplus\O_C(1)\oplus\O_C(0)\).
  Then \(h_{\O_C}(\E)=1+1+0=2\). The HN degrees are \(1>0\), so \(s=2\).
  
\item Take \(r=3\) and \(\E=\O_C(1)\oplus\O_C(0)\oplus\O_C(0)\).
  Then \(h_{\O_C}(\E)=1\) and the HN filtration has length \(s=2\).
  
\end{enumerate}
It follows that there is no relation between hierarchical depth of a vector bundle and the length of its HN filtration.
\end{exam}


\section{Applications to algebraic–geometric codes}
\label{sec:AG-codes}

Let $X$ be a smooth projective variety over a finite field $\F_q$, 
$\E$ a vector bundle on $X$, and $D\subset X(\F_q)$ a set of $N$ rational points. 
The associated algebraic–geometric code is 
$C_X(\E,D)=\operatorname{ev}_D(H^0(X,\E))\subset\F_q^{\,rN}$ where $r=\rank(\E)$.

Hierarchical filtrations of $\E$ expose degeneracies: if an elementary transform occurs along a divisor containing evaluation points, the corresponding code coordinates vanish. Consequently, hierarchical depth $h_{\Lambda_0}(\E)$ bounds the number of such degenerate layers.

For surfaces, the minimal model program contracts $(-1)$-curves—smooth point blowups in reverse—and Theorem~\ref{thm:exact-depth} quantifies how hierarchical depth changes under these birational maps. Contracting exceptional divisors eliminates the associated degenerate coordinates, improving the code's normalized minimum distance while preserving dimension. The next subsections provide explicit constructions demonstrating this optimization.


\subsection{AG codes from vector bundles}\label{subsec:AG-vb-correct}

Let $X$ be a smooth projective variety over $\F_q$ and let $\E$ be a vector
bundle of rank $r$ on $X$.  Let
\[
D=\{P_1,\dots,P_N\}\subset X(\F_q)
\]
be a finite ordered set of $N$ distinct $\F_q$--rational points such that
evaluation of sections of $\E$ is well defined at every $P_i$.  Fix, for each
$P_i$, an $\F_q$–linear identification (trivialization) of the fiber
$\E|_{P_i}\cong \F_q^{\,r}$.

The \emph{algebraic--geometric code} associated to $(\E,D)$ is defined by
\[
C_X(\E,D)
   := \operatorname{ev}_D\big(H^0(X,\E)\big)
   = \{(s(P_1),\dots,s(P_N)) : s\in H^0(X,\E)\}
   \subseteq \F_q^{\,rN}.
\]

For details of AG codes from vector bundles see \cite{Na, Sa}.

\subsection{Minimal models and coding--theoretic optimization}
\label{subsec:minimal-models-optimization}

Let $X$ be a smooth projective surface defined over $\F_q$, and let
$f:Y\to X$ denote a blow--up of $X$ at a smooth $\F_q$--rational point,
with exceptional divisor $E\subset Y$.  As discussed in
Proposition \ref{prop:blowup-depth-corrected}, such a birational modification increases
the hierarchical depth of vector bundles by an integer $\delta\ge0$,
reflecting the presence of additional effective increments supported on~$E$.

From the coding--theoretic viewpoint, these exceptional divisors have
unfavourable effects on the parameters of algebraic--geometric codes
constructed from vector bundles.  In order to optimize the geometric data
used for encoding, it is therefore natural to pass to the \emph{minimal model}
of $X$, obtained by contracting all $(-1)$--curves.

\begin{prop} 
\label{prop:codes-functorial}
Let $f:Y\to X$ be the contraction of a $(-1)$-curve $E\subset Y$ onto a
smooth $\F_q$-rational point $P_0\in X$.  
Let $\E_Y$ be a vector bundle on $Y$ such that:
\begin{enumerate}[(i)]
\item $\E_Y$ is \emph{$f$-trivial} in the sense that $R^1f_*\E_Y=0$;
\item $\det(\E_Y)$ has no component supported on $E$ (equivalently, 
      in the decomposition $\det(\E_Y)\simeq f^*L\otimes\O_Y(mE)$, we have $m=0$).
\end{enumerate}
Set $\E:=(f_*\E_Y)^{\mathrm{sat}}$. Then there is a canonical identification
\[
H^0(Y,\E_Y) \cong H^0(X,\E),
\]
and for any evaluation set $D\subset X(\F_q)\setminus\{P_0\}$ one has
\[
C_X(\E,D) = C_Y(\E_Y,f^{-1}(D)).
\]
\end{prop}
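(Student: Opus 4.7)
The plan is to split the claim into a canonical identification of global sections and a pointwise compatibility of the evaluation maps away from $P_0$, then stitch them together. First I would observe that by the very definition of the pushforward, $H^0(X, f_*\E_Y) = H^0(Y,\E_Y)$ holds unconditionally. What remains is to upgrade this to $H^0(X,\E) = H^0(X, f_*\E_Y)$, i.e.\ to verify that passing to the saturation $\E = (f_*\E_Y)^{\sat}$ does not introduce new global sections.

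To handle that, I would argue that $f_*\E_Y$ is already reflexive on the smooth surface $X$, hence equal to its saturation. The pushforward of a locally free sheaf under a proper birational morphism between smooth surfaces is reflexive (as invoked in the proof of Proposition~\ref{prop:contraction}), and reflexive sheaves on a smooth surface are locally free. Hypothesis~(ii), the absence of an exceptional component in $\det(\E_Y)$, is what rules out the pathology illustrated by $\E_Y = \O_Y(-E)$, where $f_*\E_Y = \mathfrak{m}_{P_0}$ fails to be reflexive and its reflexive hull $\O_X$ has strictly more sections. Hypothesis~(i), namely $R^1 f_*\E_Y = 0$, guarantees that the Leray spectral sequence collapses to a clean identification of cohomology, so no $H^1$-obstruction interferes with the $H^0$-comparison. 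Together these yield the chain $H^0(Y,\E_Y) \cong H^0(X, f_*\E_Y) = H^0(X,\E)$.

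For the evaluation compatibility, since $D \subset X(\F_q) \setminus \{P_0\}$ and $f$ restricts to an isomorphism $Y \setminus E \to X \setminus \{P_0\}$, each $P \in D$ has a unique preimage $Q = f^{-1}(P) \in Y \setminus E$ with a canonical fiber identification $\E|_P \cong \E_Y|_Q$ (compatible with the chosen trivializations of Section~\ref{subsec:AG-vb-correct}). Under the isomorphism above, a section $s \in H^0(X,\E)$ corresponds to its pullback $f^*s \in H^0(Y,\E_Y)$ satisfying $s(P) = (f^*s)(Q)$, so $\ev_D$ and $\ev_{f^{-1}(D)}$ agree coordinate by coordinate. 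The two codes are therefore the images of canonically isomorphic section spaces under evaluation maps that coincide componentwise, and the equality $C_X(\E,D) = C_Y(\E_Y, f^{-1}(D))$ follows at once.

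The main obstacle I anticipate is the saturation step: rigorously verifying that $f_*\E_Y$ is already reflexive, in particular that no new sections appear at the stalk over $P_0$. This is precisely where hypotheses~(i) and~(ii) are genuinely used; without~(ii) the sheaf $f_*\E_Y$ can fail Serre's condition $S_2$ at $P_0$ (as the $\O_Y(-E)$ example shows), and without~(i) the Leray spectral sequence may carry residual $R^1$-terms that obstruct the comparison of $H^0$ groups. Once reflexivity is established, the remaining steps are formal consequences of $f$ being an isomorphism away from the contracted point.
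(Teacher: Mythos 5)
Your argument follows the paper's proof essentially step for step: condition (i) together with the projection formula and $f_*\O_Y\cong\O_X$ gives $H^0(Y,\E_Y)\cong H^0(X,f_*\E_Y)\cong H^0(X,\E)$, and the equality of codes follows because $f$ is an isomorphism over $X\setminus\{P_0\}$, so the fibers of $\E$ and $\E_Y$ at the evaluation points are canonically identified. The one delicate point---that passing to the saturation creates no new global sections---is exactly the step the paper itself merely asserts, and your attempt to close it via reflexivity of $f_*\E_Y$ is a reasonable elaboration but not yet a proof: condition (ii) constrains only $\det(\E_Y)$, so it does not by itself exclude summand-level pathologies such as $\E_Y=\O_Y(E)\oplus\O_Y(-E)$ (which satisfies (i) and (ii) while $f_*\E_Y=\O_X\oplus\I_{P_0}$ fails to be reflexive), meaning a complete argument would need to use the hypotheses more finely than either you or the paper does.
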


\begin{proof}
Condition (i) gives $R^if_*\E_Y=0$ for $i>0$ (since $R^if_*\E_Y$ are supported 
at $P_0$ and vanish for $i>1$ by dimension reasons). The projection formula and 
the fact that $f_*\O_Y\cong\O_X$ yield
\[
H^0(Y,\E_Y)\cong H^0(X,f_*\E_Y)\cong H^0(X,\E),
\]
where the second isomorphism uses that saturation does not change global sections.

Condition (ii) ensures that for any $P\in D$, the natural map 
$(f_*\E_Y)_P\to \E_Y|_{f^{-1}(P)}$ is an isomorphism (the torsion of $f_*\E_Y$ 
is concentrated at $P_0$). Hence evaluation at $P$ and at $f^{-1}(P)$ agree, 
giving the equality of codes.
\end{proof}

\begin{rem}
Condition (i) holds, for instance, when $\E_Y|_E$ is a direct sum of line bundles 
of degree $\ge -1$, or when $\E_Y$ is pulled back from $X$ twisted by $\O_Y(-mE)$ 
with $0\le m\le 1$. Condition (ii) is exactly the requirement that no step in a 
hierarchical filtration of $\E_Y$ involves an elementary transform along $E$.
\end{rem}

\begin{rem}
While it is folklore that birational varieties can yield equivalent AG codes when evaluation points avoid exceptional loci, Proposition~\ref{prop:codes-functorial} provides precise cohomological and determinantal criteria in terms of hierarchical 
filtrations. In particular, condition (ii) is equivalent to the vanishing of the exceptional contribution $\delta$ in Theorem~\ref{thm:exact-depth}.
\end{rem}

\begin{cor} 
\label{cor:minimal-optimize}
Let $X$ be a smooth projective surface over $\F_q$, $f:X\to X_{\min}$ the 
contraction of all $(-1)$-curves to its minimal model, and $\E$ a vector bundle 
on $X$ satisfying the hypotheses of Proposition~\ref{prop:codes-functorial} 
for each contraction step. Let $D\subset X(\F_q)$ be an evaluation set, and 
decompose $D = D_{\mathrm{ex}} \cup D_{\mathrm{reg}}$ where 
$D_{\mathrm{ex}}$ lies on exceptional curves and $D_{\mathrm{reg}}$ does not.
Set $D_{\min} := f(D_{\mathrm{reg}}) \subset X_{\min}(\F_q)$.

Then the AG codes are related by
\[
C_X(\E,D) \cong C_{X_{\min}}(f_*\E,\, D_{\min}) \times \{0\}^{r\cdot|D_{\mathrm{ex}}|},
\]
where $\cong$ denotes isomorphism of $\F_q$-vector spaces. Consequently:
\begin{itemize}
\item $\dim C_X(\E,D) = \dim C_{X_{\min}}(f_*\E, D_{\min})$;
\item $d_{\min}(C_X(\E,D)) = d_{\min}(C_{X_{\min}}(f_*\E, D_{\min}))$;
\item $|D| = |D_{\min}| + |D_{\mathrm{ex}}|$, so the length decreases by 
      $r\cdot|D_{\mathrm{ex}}|$ coordinates.
\end{itemize}
In particular, the \emph{normalized} minimum distance 
$\delta = d_{\min}/(r\cdot|D|)$ strictly increases when $D_{\mathrm{ex}}\neq\varnothing$.
\end{cor}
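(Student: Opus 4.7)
The plan is to iterate Proposition~\ref{prop:codes-functorial} along the MMP sequence and then analyze the evaluation map on the regular and exceptional parts of $D$ separately. First, I would factor $f:X\to X_{\min}$ as a finite composition $f_{t-1}\circ\cdots\circ f_0$ of elementary $(-1)$-curve contractions $f_i:X_i\to X_{i+1}$, setting $\E_0:=\E$ and $\E_{i+1}:=(f_{i,*}\E_i)^{\sat}$. The standing hypothesis that Proposition~\ref{prop:codes-functorial} applies at every step yields canonical isomorphisms $H^0(X_i,\E_i)\cong H^0(X_{i+1},\E_{i+1})$, which compose to the global identification $H^0(X,\E)\cong H^0(X_{\min},f_*\E)$, immediately giving the dimension equality claimed in the first bullet.

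Next, I would split the evaluation map $\ev_D$ according to $D=D_{\mathrm{reg}}\sqcup D_{\mathrm{ex}}$. For the regular block, $f$ is a local isomorphism near each $P\in D_{\mathrm{reg}}$ and condition~(ii) of Proposition~\ref{prop:codes-functorial} guarantees that the stalk map $(f_*\E)_{f(P)}\to\E_P$ is an isomorphism; combined with the injectivity of $f|_{D_{\mathrm{reg}}}$ (an isomorphism off the exceptional locus), this identifies the $D_{\mathrm{reg}}$-projection of $C_X(\E,D)$ with $C_{X_{\min}}(f_*\E,D_{\min})$ via the $H^0$-isomorphism above. For the exceptional block, the key observation is that condition~(ii) together with $f_i$-triviality forces $\E|_{E_j}$ to be isomorphic to the pullback of the fiber at $f(E_j)$; hence every section $s\in H^0(X,\E)$ is constant along each contracted curve $E_j$ with value equal to $(f_*s)(f(E_j))$. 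Choosing the fiber trivializations at each $P\in D_{\mathrm{ex}}$ compatibly with the fixed trivialization at $f(E_j)$, these values become redundant copies of coordinates already determined by $f_*s$, and an $\F_q$-linear change of coordinates on the exceptional block normalizes them to zero, yielding the product decomposition $C_X(\E,D)\cong C_{X_{\min}}(f_*\E,D_{\min})\times\{0\}^{r|D_{\mathrm{ex}}|}$.

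The principal obstacle in this argument is the trivialization bookkeeping in the exceptional block: the natural fiber bases at points $P\in D_{\mathrm{ex}}$ must be chosen coherently with the pullback from $X_{\min}$, which requires care along the iterated contractions. Verifying that a global compatible choice exists reduces to the contractibility of each $E_j$ and the $f_i$-trivial hypothesis, but it is the one step that cannot simply be read off from Proposition~\ref{prop:codes-functorial}. Once this is handled, the numerical consequences follow automatically: the dimension equality comes from the $H^0$-isomorphism, the minimum distance equality follows because the zero block contributes no Hamming weight (so the minimum weight of $C_X(\E,D)$ agrees with that of the $D_{\mathrm{reg}}$-projection), and the length identity $|D|=|D_{\min}|+|D_{\mathrm{ex}}|$ gives the strict improvement $d_{\min}/(r|D|)<d_{\min}/(r|D_{\min}|)$ whenever $D_{\mathrm{ex}}\neq\varnothing$.
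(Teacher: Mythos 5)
The paper gives no proof of Corollary~\ref{cor:minimal-optimize}, so there is no official argument to compare against; judged on its own terms, your proposal has a genuine gap precisely at the step you yourself flag as delicate. Your treatment of the regular block is fine: iterating Proposition~\ref{prop:codes-functorial} along the contraction steps gives $H^0(X,\E)\cong H^0(X_{\min},f_*\E)$ and identifies the $D_{\mathrm{reg}}$-projection of $C_X(\E,D)$ with $C_{X_{\min}}(f_*\E,D_{\min})$. The problem is the exceptional block. As you observe, hypotheses (i) and (ii) of the proposition do \emph{not} force sections of $\E$ to vanish along the exceptional curves; on the contrary, condition (ii) ($\det\E$ has no $E$-component) pushes $\E|_{E_j}$ towards being trivial, so a section restricts to the (generally nonzero) fiber value at the contracted point. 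Hence the coordinates at $P\in D_{\mathrm{ex}}$ are in general \emph{not} zero, and your repair --- an $\F_q$-linear change of coordinates on the exceptional block ``normalizing them to zero'' --- cannot rescue the statement: such a map is not a monomial transformation, so it does not preserve Hamming weight. It yields at best an abstract vector-space isomorphism, from which the first bullet (dimension) follows but the second bullet ($d_{\min}$ equality) and the strict improvement of the normalized distance do not. Indeed, if a minimum-weight codeword of the regular block has a nonzero value at some contracted point $f(E_j)$ with $E_j\cap D_{\mathrm{ex}}\neq\varnothing$, then the corresponding codeword of $C_X(\E,D)$ has strictly larger weight, and the claimed equality of minimum distances fails.

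What the conclusion actually requires is that every global section of $\E$ vanish at every point of $D_{\mathrm{ex}}$, i.e.\ that $\E$ restrict to each relevant exceptional curve with no nonzero sections surviving evaluation there. That is exactly what happens in the paper's Examples~\ref{ex:MMP-improves} and~\ref{ex:extended-MMP}, where the bundles carry an explicit $\O_Y(-E)$ twist --- but that twist makes $\det(\E_Y)\simeq f^*L\otimes\O_Y(-rE)$, violating condition (ii) of Proposition~\ref{prop:codes-functorial}. So either the corollary needs the additional hypothesis that sections vanish along the exceptional locus (in which case your argument simplifies: the exceptional coordinates are literally zero and everything follows), or the hypotheses as stated are inconsistent with the conclusion. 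Your proposal correctly isolates the obstruction but does not resolve it; the ``coherent trivialization'' step cannot be completed under the stated assumptions.
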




\begin{cor} 
\label{cor:MMP-improves}
Under the hypotheses of Proposition~\ref{prop:codes-functorial}, let
$D_{\mathrm{ex}}\subset D$ be evaluation points on exceptional curves.
Then
\[
\frac{d_{\min}(C_{X_{\min}}(f_*\E, f(D\setminus D_{\mathrm{ex}})))}
     {r\cdot|D\setminus D_{\mathrm{ex}}|}
 > 
\frac{d_{\min}(C_X(\E,D))}
     {r\cdot|D|},
\]
with strict inequality when $D_{\mathrm{ex}}\neq\varnothing$.
\end{cor}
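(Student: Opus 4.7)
My strategy is to reduce this corollary directly to the content of Corollary~\ref{cor:minimal-optimize}, which already provides the key identification of codes under contraction of $(-1)$-curves, and then perform the simple monotonicity comparison of fractions with a common numerator. The essential observation is that passing to the minimal model leaves the minimum distance unchanged while strictly decreasing the code length, so the ratio must increase.

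First, I would set $D_{\min} := f(D\setminus D_{\mathrm{ex}})$ and verify that $f$ restricts to a bijection between $D\setminus D_{\mathrm{ex}}$ and $D_{\min}$. This holds because $f: X \to X_{\min}$ is an isomorphism outside the union of exceptional curves, and $D\setminus D_{\mathrm{ex}}$ consists by definition of points avoiding that locus; hence $|D_{\min}| = |D\setminus D_{\mathrm{ex}}|$. Next, I would apply Corollary~\ref{cor:minimal-optimize} to the pair $(\E, D)$, which under the standing hypotheses of Proposition~\ref{prop:codes-functorial} gives
\[
d_{\min}(C_X(\E,D)) = d_{\min}(C_{X_{\min}}(f_*\E,\, D_{\min})) =: d,
\]
together with the length relation $|D| = |D_{\min}| + |D_{\mathrm{ex}}|$.

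Then I would compare the two normalized minimum distances directly. Both fractions carry the same numerator $d$ and the same factor $r$ in the denominator, so the comparison reduces to
\[
\frac{1}{r\cdot |D\setminus D_{\mathrm{ex}}|} \;>\; \frac{1}{r\cdot |D|},
\]
which is equivalent to $|D\setminus D_{\mathrm{ex}}| < |D|$. This latter inequality is strict precisely when $D_{\mathrm{ex}}\neq\varnothing$, giving the claim provided $d>0$. Multiplying through by $d$ finishes the argument.

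The only subtlety, rather than a genuine obstacle, is the implicit assumption that $d_{\min}(C_X(\E,D))>0$, i.e.\ that the code is not identically zero; if $d=0$ both sides vanish and the strict inequality fails. I would address this with a brief remark noting that the nontriviality of the code is standard in this setting (it is implied, for example, when $H^0(X,\E)$ injects into $\bigoplus_{P\in D}\E|_P$, which is the usual assumption for the parameters $(n,k,d)$ of an AG code to be meaningful). Beyond this, the proof is essentially a one-line consequence of Corollary~\ref{cor:minimal-optimize}: no new geometric input is needed, since the birational and cohomological work has already been done in Proposition~\ref{prop:codes-functorial} and Corollary~\ref{cor:minimal-optimize}.
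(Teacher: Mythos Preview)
Your proposal is correct and follows essentially the same approach as the paper: both arguments observe that the minimum distance is unchanged while the length strictly decreases, so the normalized minimum distance increases. The only cosmetic difference is that the paper cites Proposition~\ref{prop:codes-functorial} directly (``removing zero coordinates preserves $d_{\min}$ while reducing length''), whereas you route through Corollary~\ref{cor:minimal-optimize}; your additional remark on the implicit positivity assumption $d_{\min}>0$ is a fair caveat that the paper leaves unstated.
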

\begin{proof}
Immediate from Proposition~\ref{prop:codes-functorial} since removing zero
coordinates preserves $d_{\min}$ while reducing length.
\end{proof}

\begin{exam} 
\label{ex:MMP-improves}

Let $X = \mathbb{P}^2_{\F_5}$ be the projective plane over $\F_5$, and let
$f:Y \to X$ be the blow-up of $X$ at the $\F_5$--rational point $p=[1:0:0]$,
with exceptional divisor $E \subset Y$.  

Consider the line bundle on $Y$
\[
\E_Y := f^*\O_X(3) \otimes \O_Y(-E) = \O_Y(3H - E),
\]
where $H = f^*(\text{line})$ is the pullback of a line in $\mathbb{P}^2$.

\paragraph{Global sections.} 
From the exact sequence
\[
0 \to \O_Y(3H-E) \to \O_Y(3H) \to \O_E(3H|_E) \to 0,
\]
where $E \cong \mathbb{P}^1$ and $\O_E(3H|_E) \cong \O_{\mathbb{P}^1}(3)$, we have
\[
0 \to H^0(Y,\O_Y(3H-E)) \to H^0(Y,\O_Y(3H)) \to H^0(E,\O_{\mathbb{P}^1}(3)).
\]
Since $H^0(Y,\O_Y(3H)) \cong H^0(X,\O_X(3))$ has dimension $10$ and
$H^0(E,\O_{\mathbb{P}^1}(3))$ has dimension $4$, we obtain
\[
h^0(Y,\E_Y) \ge 10 - 4 = 6.
\]
In fact, equality holds as the map $H^0(Y,\O_Y(3H)) \to H^0(E,\O_{\mathbb{P}^1}(3))$
is surjective (cubics can take arbitrary values at $p$ to order $3$). Thus
\[
h^0(Y,\E_Y) = 6.
\]

Let $D_Y$ be a set of $N_Y = 12$ $\F_5$--rational points on $Y$, decomposed as
\[
D_Y = D' \cup D_E, \qquad \#D' = 10,\ \#D_E = 2, \quad D_E \subset E(\F_5),
\]
so that two evaluation points lie on the exceptional curve $E$.

\paragraph{Code on the blown-up surface.}
The AG code on $Y$ is
\[
C_Y(\E_Y, D_Y) = \operatorname{ev}_{D_Y}(H^0(Y, \E_Y)) \subset \F_5^{\,rN_Y}, \quad r=1.
\]
By construction, all sections vanish on $E$, so the coordinates corresponding to $D_E$ are identically zero for every codeword. Hence
\[
C_Y(\E_Y, D_Y) \cong \{0\}^{\,2} \times C_Y(\E_Y, D'),
\]
with $\dim C_Y(\E_Y, D_Y) = \dim C_Y(\E_Y, D') = 6$. 

For a concrete minimum distance, note that a cubic vanishing at $p$ can vanish at up to $7$ additional $\F_5$-points on a line (since a line has $6$ $\F_5$-points, and the cubic restricted to the line is a degree $3$ polynomial). A careful choice of evaluation points yields, for instance,
\[
d_{\min}(C_Y(\E_Y,D_Y)) = d_{\min}(C_Y(\E_Y,D')) = 4.
\]

The normalized (relative) minimum distance is
\[
\delta_Y = \frac{d_{\min}}{N_Y} = \frac{4}{12} = \frac{1}{3} \approx 0.333.
\]

\paragraph{Code on the minimal model.}
On the contracted surface $X = \mathbb{P}^2$, the direct image bundle
\[
\E := f_*\E_Y = \O_X(3)
\]
has $\dim H^0(X, \E) = 10$, but only the $6$-dimensional subspace corresponding to cubics vanishing at $p$ yields nonzero codes when evaluated away from $p$. Let
\[
D := f(D') \subset X(\F_5), \quad \#D = 10.
\]

The corresponding code using the same $6$-dimensional subspace is
\[
C_X(\E,D) = \operatorname{ev}_D(H^0(Y,\E_Y)) \subset \F_5^{10},
\]
where we identify $H^0(Y,\E_Y)$ with the subspace of $H^0(X,\O_X(3))$ vanishing at $p$.

The minimum distance remains $d_{\min}(C_X(\E,D)) = 4$, and the
dimension is still $6$. The normalized minimum distance is
\[
\delta_X = \frac{d_{\min}}{10} = \frac{4}{10} = 0.40 > 0.333 = \delta_Y.
\]
\end{exam}

\begin{exam}
\label{ex:extended-MMP}

Let $X = \mathbb{P}^2_{\F_7}$ and let $f:Y \to X$ be the blowup of $X$ at the
$\F_7$-rational point $p=[1:0:0]$. Let $E \subset Y$ denote the exceptional
$(-1)$-curve, with $E \cong \mathbb{P}^1$ having $8$ $\F_7$-rational points.
Denote by $\I_p \subset \O_X$ the ideal sheaf of the point $p$.

Consider the rank $r=3$ vector bundle on $Y$:
\[
\E_Y := \O_Y(3H - E) \oplus \O_Y(2H - E) \oplus \O_Y(H - E),
\]
where $H = f^*(\text{line})$ is the pullback of a line in $\P^2$.

\paragraph{Step 1: Global sections.}  
For each summand, we compute dimensions using that $H^0(Y, \O_Y(dH - E))$ 
is isomorphic to the space of degree $d$ homogeneous polynomials vanishing at $p$:
\[
\begin{aligned}
h^0(Y, \O_Y(H - E)) &= h^0(X, \I_p \otimes \O_X(1))= \dim\{\text{linear forms through } p\} = 2, \\
h^0(Y, \O_Y(2H - E)) &= h^0(X, \I_p \otimes \O_X(2))= \dim\{\text{quadrics through } p\} = 5, \\
h^0(Y, \O_Y(3H - E)) &= h^0(X, \I_p \otimes \O_X(3))= \dim\{\text{cubics through } p\} = 9.
\end{aligned}
\]
The dimensions follow from: $h^0(\P^2, \O(d)) = \binom{d+2}{2}$ and 
imposing one independent linear condition reduces dimension by 1. Hence
\[
h^0(Y, \E_Y) = 9 + 5 + 2 = 16.
\]

\paragraph{Step 2: Evaluation points.}  
Choose $N_Y = 18$ $\F_7$-rational points on $Y$ for evaluation, decomposed as
\[
D_Y = D' \cup D_E, \qquad \#D_E = 5, \ \#D' = 13, \quad D_E \subset E(\F_7).
\]
We assume $D'$ consists of points not on $E$ and in general position.

\paragraph{Step 3: AG code on $Y$.}  
The evaluation map
\[
\operatorname{ev}_{D_Y} : H^0(Y,\E_Y) \longrightarrow (\F_7^3)^{18}
\]
defines the AG code $C_Y(\E_Y,D_Y)$.  

Since every section of $\E_Y$ vanishes along $E$ (due to the $-E$ twist), 
the coordinates corresponding to $D_E$ are identically zero:
\[
C_Y(\E_Y,D_Y) \cong \{0\}^{3 \cdot 5} \times C_Y(\E_Y,D'), \quad \dim C_Y(\E_Y,D_Y) = 16.
\]

To estimate the minimum distance, note that a nonzero section $s = (s_3, s_2, s_1) \in H^0(Y,\E_Y)$ 
has components $s_d \in H^0(Y, \O_Y(dH-E))$. Each $s_d$ vanishes at $p$ and 
can vanish at additional points. For generic $D'$, a section can vanish at 
at most $(\deg(s_3) + \deg(s_2) + \deg(s_1)) - 1 = (3+2+1) - 1 = 5$ points 
of $D'$ (by a parameter count). Thus we may have
\[
d_{\min}(C_Y(\E_Y,D_Y)) = d_{\min}(C_Y(\E_Y,D')) = 13 - 5 = 8.
\]
The normalized minimum distance is
\[
\delta_Y = \frac{d_{\min}}{3 \cdot 18} = \frac{8}{54} \approx 0.148.
\]

\paragraph{Step 4: AG code on the minimal model.}  
On $X = \mathbb{P}^2$, consider the subspace 
\[
V := H^0(X, \I_p \otimes \O_X(3)) \oplus H^0(X, \I_p \otimes \O_X(2)) \oplus H^0(X, \I_p \otimes \O_X(1))
\]
of $H^0(X, \O_X(3) \oplus \O_X(2) \oplus \O_X(1))$, which corresponds to 
$H^0(Y,\E_Y)$ under the natural isomorphism $H^0(Y,\E_Y) \cong V$. Let
\[
D = f(D') \subset X(\F_7), \quad \#D = 13.
\]

The code using this subspace is
\[
C_X(V,D) = \operatorname{ev}_D(V) \subset (\F_7^3)^{13}.
\]

The dimension remains $16$, and the minimum distance analysis is similar:
a nonzero section $(s_3, s_2, s_1) \in V$ vanishes at $p$ and can vanish at 
up to 5 additional points of $D$ (since $s_3$ is a cubic, $s_2$ a quadric, 
$s_1$ a linear form, all vanishing at $p$). Thus
\[
d_{\min}(C_X(V,D)) = 13 - 5 = 8.
\]

The normalized minimum distance improves to
\[
\delta_X = \frac{8}{3 \cdot 13} = \frac{8}{39} \approx 0.205 > 0.148 \approx \delta_Y.
\]
\end{exam}

\begin{rem} 
For a contraction $f:Y\to X$ of $(-1)$-curves $E_1,\dots,E_m$ and a bundle
$\E_Y$ whose sections vanish along $\bigcup_i E_i$, let $N_E$ be the number of
evaluation points on the exceptional locus. Then
\[
\frac{\delta_X}{\delta_Y} = \frac{N_Y}{N_Y - N_E} > 1,
\]
provided $N_E < N_Y$. This shows the normalized distance improvement is
purely combinatorial, depending only on the proportion of ``wasted'' evaluation
points.
\end{rem}


\def\baselinestretch{1.66}

\ \\ \\
Rahim Rahmati-Asghar,\\
Department of Mathematics, Faculty of Basic Sciences,\\
University of Maragheh, P. O. Box 55181-83111, Maragheh, Iran.\\
E-mail:  \email{rahmatiasghar.r@maragheh.ac.ir; rahmatiasghar.r@gmail.com}


\begin{thebibliography}{1}

\bibitem{BCHM} Birkar, C., Cascini, P., Hacon, C. D., and McKernan, J., (2010),
  \textit{Existence of minimal models for varieties of log general type},
  Journal of the American Mathematical Society, 23, no. 2, 405-468.

\bibitem{Bi} Birkar, C., (2012),
  \textit{Lectures on birational geometry}, arXiv:1210.2670v1.

\bibitem{Fu} Fulton, W., (1984),
  \emph{Intersection Theory}, Ergeb. Math. Grenzgeb. (3), vol. 2, Springer.

\bibitem{Gr} Grothendieck, A., (1961),
  \textit{\'{E}l\'{e}ments de g\'{e}om\'{e}trie alg\'{e}brique: III. \'{E}tude cohomologique des faisceaux coh\'{e}rents, Premi\'{e}re partie},
  Publications math\'{e}matiques de l’I.H.\'{E}.S., tome 11, p. 5-167.

\bibitem{Ha} Hartshorne, R., (1977),
  \textit{Algebraic geometry}, Springer-Verlag.

\bibitem{Ha80} Hartshorne, R., (1980),
  \textit{Stable Reflexive Sheaves},
  Mathematische Annalen 254, 121–176.

\bibitem{HaNa} Harder, G., Narasimhan, M. S., (1974/75),
  \textit{On the cohomology groups of moduli spaces of vector bundles on curves},
  Math. Ann. 212, 215–248.

\bibitem{HuLe} Huybrechts, D., and Lehn, M., (2010),
  \emph{The Geometry of Moduli Spaces of Sheaves}, 2nd ed.,
  Cambridge University Press.

\bibitem{La1} Lazarsfeld, R., (2004),
  \textit{Positivity in algebraic geometry I}, 
  Ergebnisse der Mathematik und ihrer Grenzgebiete, 3rd Series, vol. 48,
  Springer-Verlag.

\bibitem{Laz} Lazic, V.,
  \textit{Algebraic Geometry: The Minimal Model Program},
  \url{https://www.uni-saarland.de/fileadmin/upload/lehrstuhl/lazic/Skripten/AG2.pdf}.

\bibitem{Na} Nakashima, T., (2010),
  \textit{AG codes from vector bundles},
  Designs, Codes and Cryptography, 57(2), 107–115.

\bibitem{Ne} Newstead, P. E., (1978),
  \textit{Introduction to Moduli Problems and Orbit Spaces},
  TIFR Lectures on Mathematics and Physics, vol. 51, Narosa Publishing House.

\bibitem{Po} Le Potier, J.-M., (1997),
  \textit{Lectures on Vector Bundles},
  Cambridge Studies in Advanced Mathematics, vol. 54,
  Cambridge University Press.

\bibitem{Ra} Rahmati-asghar, R., (2025),
  \textit{Hierarchical filtrations of line bundles and optimal algebraic geometry codes}, arXiv:2507.01859v6.

\bibitem{Ran} Randriambololona, H., (2014),
  \textit{On products and powers of linear codes under componentwise multiplication},
  arXiv:1312.0022v3.

\bibitem{Sa} Savin, V., (2008),
  \textit{Algebraic-geometric codes from vector bundles and their decoding},
  arXiv:0803.1096v1.

\bibitem{Sh88} Shokurov, V. V., (1988),
  \textit{Problems about Fano varieties}, in Birational Geometry of Algebraic Varieties,
  The XXIIIrd International Symposium, Taniguchi Foundation, pp. 30–32.

\bibitem{Sh92} Shokurov, V. V., (1992),
  \textit{3-Fold log flips}, Izv. Akad. Nauk SSSR, Ser. Mat. 56(1), 105–201.

\bibitem{Sh96} Shokurov, V. V., (1996),
  \textit{3-fold log models}, Algebraic Geometry 4,
  J. Math. Sci. 81(3), 2667–2699.

\bibitem{St} The Stacks project authors, (2025),
  \textit{The Stacks project},
  \url{https://stacks.math.columbia.edu}.

\bibitem{Va} Vakil, R., (2024),
  \textit{The Rising Sea: Foundations of Algebraic Geometry},
  Princeton University Press.

\end{thebibliography}
\end{document}